\documentclass{article}
\usepackage{verbatim,amsmath,amsfonts,amssymb,amsthm}

\newtheorem {theorem}{Theorem}[section]
\newtheorem {proposition}[theorem]{Proposition}
\newtheorem {lemma}[theorem]{Lemma}
\newtheorem {example}[theorem]{Example}
\newtheorem {corollary}[theorem]{Corollary}
\newtheorem {question}[theorem]{Question}
\newtheorem {remark}[theorem]{Remark}
\newtheorem*{idfn}{Definition}

\numberwithin{equation}{section}

\begin{document}

\title{On perfect cones and absolute Baire-one retracts}

\author{Olena Karlova}

\maketitle

{\small
  We introduce perfect cones over topological spaces and study their connection with absolute $B_1$-retracts.}

\section{Introduction}

A subset $E$ of a topological space $X$ is a retract of $X$ if there exists a continuous mapping $r:X\to E$ such that $r(x)=x$ for all $x\in E$. Different modifications of this notion in which $r$ is allowed to be discontinuous (in particular, almost continuous or a Darboux function) were considered in \cite{Garrett,Paw,RGK,St}. The author introduced in \cite{KRaex} the notion of $B_1$-retract, i.e. a subspace $E$ of $X$ for which there exists a Baire-one mapping  $r:X\to E$ with $r(x)=x$ on $E$. Moreover, the following two results were obtained in \cite{KRaex}.

\begin{theorem}\label{th:Raex1}
Let $X$ be a normal space and $E$ be an arcwise connected and locally arcwise connected metrizable $F_\sigma$- and $G_\delta$-subspace of $X$. If

(i) $E$ is separable, or

(ii) $X$ is collectionwise  normal,

\noindent then $E$ is a $B_1$-retract of $X$.
\end{theorem}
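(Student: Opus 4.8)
The plan is to exhibit $E$ as the pointwise limit, on all of $X$, of a sequence of continuous maps into $E$ whose restrictions to $E$ converge to the identity, while arranging the (unavoidable) discontinuities of the limit to accumulate only on the exhausting closed pieces of $X\setminus E$. Since $E$ is $F_\sigma$ in $X$, write $E=\bigcup_n E_n$ with $E_n$ closed in $X$ and $E_n\subseteq E_{n+1}$; since $E$ is $G_\delta$ in $X$, the complement $X\setminus E$ is $F_\sigma$, so write $X\setminus E=\bigcup_n D_n$ with $D_n$ closed in $X$ and increasing. Fix a compatible metric $\rho$ on $E$. I would construct continuous maps $r_n:X\to E$ satisfying (a) $\rho(r_n(x),x)\to 0$ for every $x\in E$, and (b) $r_{n+1}|_{D_n}=r_n|_{D_n}$, so that $r_n$ stabilizes on each $D_m$. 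Then $r:=\lim_n r_n$ converges pointwise everywhere (by (a) on $E$, by (b) off $E$), is Baire-one as a pointwise limit of continuous maps into the metrizable space $E$, satisfies $r|_E=\mathrm{id}_E$ by (a), and retracts $X$ onto $E$.

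It is essential to note that one should \emph{not} try to make $r_n$ equal to the identity on $E_n$ and pass to the limit: a continuous self-map of $X$ into $E$ fixing larger and larger closed subsets would, in the limit, furnish a continuous retraction of $X$ onto $E$, which in general does not exist (the plane has no continuous retraction onto the circle). Allowing the limit to be merely Baire-one is precisely what circumvents this obstruction, and arcwise together with local arcwise connectivity — rather than an AR/ANR hypothesis on $E$ — are the features that make the approximate construction possible. To build $r_n$, I would cover $E$ by open, arcwise connected subsets of $\rho$-diameter $<1/n$ (local arcwise connectivity supplies such a basis), choose a base point in each member, and join the base points of overlapping members by arcs in $E$ (arcwise connectivity supplies these arcs). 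Extending this cover to an open neighbourhood of $E$ in $X$ and taking a subordinate, locally finite partition of unity $\{\varphi_{n,\alpha}\}$, I would define $r_n$ by moving base points along the selected arcs according to the weights $\varphi_{n,\alpha}(x)$, that is, by realizing the nerve of the cover through singular simplices built from arcs in $E$. Since all arcs used near a point of $E$ lie in a set of diameter $<1/n$, this yields $\rho(r_n(x),x)<1/n$ on $E$, hence (a); off the neighbourhood of $E$ the values of $r_n$ are pushed onto the previously fixed ones so that (b) holds.

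The separability versus collectionwise-normality dichotomy enters exactly at the transfer of the cover and the partition of unity from $E$ into $X$. When $E$ is separable the cover may be taken countable, and ordinary normality of $X$ suffices to separate the finitely overlapping neighbourhoods and to extend the associated Urysohn functions. When $E$ is not separable one must transport an uncountable $\sigma$-discrete family of neighbourhoods into $X$ while keeping the transported family discrete, and this is precisely what collectionwise normality of $X$ provides.

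The main obstacle is the continuous selection of the realizing arcs so that each $r_n$ is genuinely continuous into the non-AR space $E$ while the limit is \emph{exactly} the identity on all of $E$. Two difficulties must be reconciled. First, at points lying in several members of the cover one must realize the corresponding higher simplices of the nerve by singular simplices inside small arcwise connected subsets of $E$, and producing these compatibly using only one-dimensional arcwise connectivity (where small sets need not be contractible) is delicate. Second, the loci where the selection fails to match across overlaps — the \emph{seams} of the construction — must be swept so that, for each $x\in E$, $x$ lies in only finitely many seams; equivalently the seams on $E$ should have empty $\limsup$, as in the plane-onto-circle model, where the wrapping arc must be placed with lengths shrinking fast enough that no point of $E$ belongs to infinitely many of them. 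Reconciling continuity of each $r_n$ with exact convergence to $\mathrm{id}_E$ is the homotopical heart of the argument, and is where the flexibility of arcs together with the freedom to be merely Baire-one must be exploited.
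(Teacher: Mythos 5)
The theorem you are proving is quoted in the paper from \cite{KRaex} without proof, so there is no in-paper argument to compare against; your proposal has to stand on its own, and it does not. Its skeleton is reasonable and is indeed the standard one for results of this kind: decompose $E=\bigcup_n E_n$ and $X\setminus E=\bigcup_n D_n$ into increasing closed sets, build continuous maps $r_n:X\to E$ that stabilize on the $D_n$ and converge pointwise to the identity on $E$, and let the separability versus collectionwise-normality dichotomy govern how a cover of $E$ is transported into $X$. But you never construct the maps $r_n$; you explicitly defer this as ``the homotopical heart of the argument.'' That deferred step \emph{is} the theorem.

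Worse, the plan you do describe cannot work as stated. (i) You claim the nerve construction yields $\rho(r_n(x),x)<1/n$ for all $x\in E$. This is impossible in general: take $E=S^1\subseteq X=\mathbb R^2$, which satisfies every hypothesis. Any continuous $r_n:\mathbb R^2\to S^1$ restricts on $S^1$ to a map extending over the closed disk, hence nullhomotopic of degree $0$; but a self-map of $S^1$ uniformly within $1/n$ of the identity is nowhere antipodal to it, hence homotopic to it and of degree $1$ --- a contradiction. So uniform closeness on $E$ is unattainable; the ``seams'' of your final paragraph are not a refinement but a necessity, they flatly contradict your condition (a) as you derive it, and you give no construction of them (where they are placed, how they coexist with the partition of unity while keeping each $r_n$ continuous, why each point of $E$ meets only finitely many). (ii) The defining formula for $r_n$ --- moving base points along arcs ``according to the weights $\varphi_{n,\alpha}(x)$'' --- is not a definition: $E$ has no affine structure in which to combine three or more base points, and realizing the simplices of the nerve of dimension $\ge 2$ inside small subsets of $E$ requires local $k$-connectedness for $k\ge 1$, which local arcwise connectivity ($LC^0$) does not supply; you concede this yourself. (iii) Even the bookkeeping requirement $r_{n+1}|_{D_n}=r_n|_{D_n}$ needs justification, since gluing prescribed maps into a non-contractible target across disjoint closed sets is obstructed (compare Lemma~\ref{l1} of the paper, whose proof uses contractibility of the target; for $E=S^1$, a map of degree $1$ on one boundary circle of an annulus and a constant map on the other admit no common continuous extension into $S^1$). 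A workable construction must keep the images of the approximating maps inside one-dimensional skeletons --- finite unions of arcs, which are compact absolute retracts --- so that extension and gluing are unobstructed; this is exactly the device used in Theorem~\ref{th:ConeOverZeroDim} of the paper, and nothing in your proposal substitutes for it.
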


\begin{theorem} Let $X$ be a completely metrizable space and let $E$ be an arcwise connected and locally arcwise connected $G_\delta$-subspace of $X$. Then $E$ is a $B_1$-retract of $X$.
\end{theorem}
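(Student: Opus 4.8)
The plan is to deduce the theorem from Theorem~\ref{th:Raex1} by supplying, through completeness, the one hypothesis that is otherwise missing. Since $X$ is completely metrizable it is metrizable, hence both normal and collectionwise normal, so the ambient requirements of Theorem~\ref{th:Raex1}(ii) hold automatically; moreover $E$, being a subspace of a metrizable space, is metrizable, and being a $G_\delta$-subset of a completely metrizable space it is itself completely metrizable. Fix a metric $d$ on $X$ and a complete metric $\rho$ on $E$ inducing its topology. The sole hypothesis of Theorem~\ref{th:Raex1} not at our disposal is that $E$ be $F_\sigma$. What we do have is the dual fact that $X\setminus E$ is $F_\sigma$: writing $E=\bigcap_n U_n$ with $U_n$ open and decreasing, the sets $C_n=X\setminus U_n$ are closed, increase to $X\setminus E$, and assign to each $x\notin E$ a finite level $\ell(x)=\max\{n:x\in U_n\}$ with $\ell(x)\to\infty$ as $x\to E$.

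The guiding principle is the standard sufficient condition for Baire class one: a map that is continuous on each member of a countable closed cover of $X$ has $F_\sigma$ preimages of open sets and is therefore Baire-one. In Theorem~\ref{th:Raex1} the assumption that $E$ is $F_\sigma$ furnishes closed sets covering $E$, on which the retraction (being the identity) is trivially continuous, to complement the closed sets $C_n$ on which it is built by hand. Here that half of the cover is unavailable, so instead I would realize $r$ as a pointwise limit of \emph{globally} continuous maps $r_k:X\to E$, which is an equally good route to Baire class one. To construct $r_k$ I would fix a $\sigma$-discrete open refinement of a cover of $X$ by sets of $d$-diameter $<2^{-k}$ that meet $E$, choose for each member a representative point of $E$ inside it, and splice these representatives together using short arcs supplied by the local arcwise connectivity of $E$; the $\sigma$-discreteness keeps the combinatorics of the splicing trivial, while arcwise connectivity of $E$ guarantees that the assembled map lands in $E$ and, by construction, $r_k$ moves each point of $E$ by less than $2^{-k}$.

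It then remains to pass to the limit. For $x\in E$ the displacement estimate gives $r_k(x)\to x$ in $E$, which is the retraction property, and since every $r_k$ is continuous the pointwise limit $r$ is automatically Baire-one. The role of completeness is to make the construction coherent: the representative points and connecting arcs at consecutive levels must be chosen compatibly over all of $X$, and it is the completeness of $(E,\rho)$—equivalently, the fact that $E$ is absolutely $G_\delta$—that guarantees these nested, shrinking choices assemble into continuous maps into $E$ and converge, rather than escaping to a limit point of $E$ lying outside $E$. This is precisely the feature that the missing $F_\sigma$ hypothesis supplies in Theorem~\ref{th:Raex1} and that must here be recovered from completeness. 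I expect the main obstacle to be exactly this construction of the continuous approximants near $\overline{E}\setminus E$: because $E$ is only arcwise and locally arcwise connected it carries no convex or ANR structure, so every interpolation must be realized by one-dimensional arcs, and one has to arrange the $\sigma$-discrete covers so that their nerves never demand a higher-dimensional filling, all while controlling $\rho$-diameters finely enough for completeness to force convergence back into $E$.
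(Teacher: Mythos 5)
Your argument is a strategy outline in which the decisive steps are identified as obstacles but never executed, so it does not constitute a proof. (For reference: the paper states this theorem without proof, citing \cite{KRaex}, so there is no argument in the text itself to compare against.) Your preliminary reductions are fine: metrizability of $X$ gives collectionwise normality, and $E$, being $G_\delta$ in a completely metrizable space, is completely metrizable; you are also right that the $F_\sigma$ hypothesis of Theorem~\ref{th:Raex1} is irrecoverable here, so a direct construction of continuous $r_k:X\to E$ converging pointwise on all of $X$ to a retraction is indeed what is required. But the construction you sketch breaks down at its first step: a cover of $X$ by sets of $d$-diameter $<2^{-k}$ \emph{that meet} $E$ exists only if every point of $X$ lies within $2^{-k}$ of $E$, i.e.\ only if $E$ is suitably dense; points of $X$ far from $E$ cannot be covered at all, and you never say what $r_k$ does, or why it converges, on $X\setminus\overline{E}$ (the definition of $B_1$-retract requires pointwise convergence, with limit in $E$, at \emph{every} point of $X$). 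More seriously, the ``splicing'' is the whole difficulty, and $\sigma$-discreteness does not make it trivial: members of \emph{different} discrete families of a $\sigma$-discrete refinement can overlap arbitrarily, so the nerve of your cover has simplices of arbitrarily large dimension, and joining representative points by arcs only handles its $1$-skeleton. Since $E$ is merely locally arcwise connected --- not locally contractible, hence in general not an ANR (a Hawaiian-earring-type space satisfies every hypothesis on $E$) --- the higher-dimensional fillings that a partition-of-unity/nerve argument demands need not exist in $E$. You acknowledge this (``arrange the covers so that their nerves never demand a higher-dimensional filling'') but supply no mechanism for it, and that mechanism is the theorem's actual content.

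The appeal to completeness has the same status. Your displacement estimates are in the ambient metric $d$, whereas convergence of $(r_k(x))_k$ to a point \emph{of} $E$ requires Cauchyness with respect to a complete metric $\rho$ on $E$, and $\rho$ is only topologically, not uniformly, equivalent to $d|_E$: $d$-small displacements and $d$-small arcs give no $\rho$-control whatsoever. Likewise, local arcwise connectedness is a pointwise property with no uniform modulus, so two representatives that are $d$-close need not be joined by any $\rho$-small arc; repairing this forces the covers to be refined according to local moduli and the level-$(k+1)$ choices to be made coherently with the level-$k$ choices over all of $X$ --- exactly the ``nested, shrinking choices chosen compatibly'' that you postulate but do not construct. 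Note finally that the paper's Example~\ref{ex:ExMain} (the perfect cone over the irrationals) shows the conclusion genuinely fails for arcwise connected $G_\delta$ sets once local arcwise connectedness is dropped, so any correct proof must exploit that hypothesis in a quantitative way; your sketch invokes it only qualitatively, and that is precisely where the gap lies.
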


Note that in the above mentioned results $E$ is a locally arcwise connected space. Therefore, it is naturally to ask

\begin{question}\label{question}
 Is any arcwise connected $G_\delta$-subspace $E$ of a completely metrizable space $X$ a $B_1$-retract of this space?
\end{question}

In this work we introduce the notions of the perfect cone over a topological space and an absolute $B_1$-retract~(see definitions in Section~\ref{sec:Preliminaries}). We show that the perfect cone over a $\sigma$-compact metrizable zero-dimensional space is an absolute $B_1$-retract. Moreover, we give the negative answer to Question~\ref{question}.

\section{Preliminaries}\label{sec:Preliminaries}

Throughout the paper, all topological spaces have no separation axioms if it is not specified.

A mapping $f:X\to Y$ is {\it a Baire-one mapping} if there exists a sequence of continuous mappings $f_n:X\to Y$ which converges to $f$ pointwise on $X$.

A subset $E$ of a topological space $X$ is called
\begin{itemize}

\item  {\it a $B_1$-retract of $X$} if there exists a sequence of continuous mappings $r_n:X\to E$ such that $r_n(x)\to r(x)$ for all $x\in X$ and $r(x)=x$ for all $x\in E$; the mapping $r:X\to E$ is called {\it a $B_1$-retraction} of $X$ onto~$E$;

\item {\it a $\sigma$-retract of $X$} if $E=\bigcup\limits_{n=1}^\infty E_n$, where $(E_n)_{n=1}^\infty$ is an increasing sequence of retracts of  $X$;

\item {\it ambiguous} if it is simultaneously $F_\sigma$ and $G_\delta$ in $X$.
\end{itemize}

A topological space $X$ is
\begin{itemize}

\item {\it perfectly normal} if it is normal and every closed subset of $X$ is $G_\delta$;

 \item {\it an absolute $B_1$-retract (in the class $\mathcal C$ of topological spaces)} if $X\in\mathcal C$ and for any homeomorphism $h$, which maps $X$ onto a $G_\delta$-subset $h(X)$ of a space $Y\in\mathcal C$, the set $h(X)$ is a $B_1$-retract of $Y$; in this paper we will consider only the case $\mathcal C$ is the class of all perfectly normal spaces;

  \item {\it a space with the regular $G_\delta$-diagonal} if there exists a sequence $(G_n)_{n=1}^\infty$ of open neighborhoods of the diagonal $\Delta=\{(x,x):x\in X\}$ in $X^2$ such that  $\Delta=\bigcap\limits_{n=1}^\infty G_n=\bigcap\limits_{n=1}^\infty  \overline{G}_n$;

  \item {\it contractible} if there exists a continuous mapping $\gamma:X\times [0,1]\to X$ and $x^*\in X$ such that $\gamma(x,0)=x$ and $\gamma(x,1)=x^*$ for all $x\in X$; the mapping $\gamma$ we call {\it a contraction}.
\end{itemize}

A family $(A_s:s\in S)$ of subsets of $X$ is said to be {\it a partition of $X$} if $X=\bigcup\limits_{s\in S}A_s$ and $A_s\cap A_t=\O$ for all $s\ne t$.

If a space $X$ is homeomorphic to a space $Y$ then  we denote this fact by $X\simeq Y$.

For a mapping $f:X\times Y\to Z$ an a point $(x,y)\in X\times Y$ let $f^x(y)=f_y(x)=f(x,y)$.

\section{Perfect cones and their properties}
{\it The cone} $\Delta (X)$ {\it over a topological space} $X$ is the quotient space $(X\times [0,1])/(X\times\{0\})$ with the quotient mapping $\lambda:X\times [0,1]\to \Delta(X)$. By $v$ we denote {\it the vertex of the cone}, i.e. $v=\lambda(X\times\{0\})$. We call the set $\lambda(X\times\{1\})$ {\it the base of the cone}.

Let $X_1=(0,1)$ and $X_2=[0,1]$. Then $\Delta(X_2)$ is homeomorphic to a triangle $T\subseteq [0,1]^2$, while $\Delta(X_1)$ is not even metrizable, since there is no countable base of neighborhoods at the cone vertex. Consequently, the naturally embedding of $\Delta(X_1)$ into $\Delta(X_2)$ is not a homeomorphism. Therefore, on the cone $\Delta(X)$ over a space $X$ naturally appears one more  topology $\mathcal T_p$ which coincides with the quotient topology $\mathcal T$ on $X\times (0,1]$ and the base of neighborhoods of the vertex $v$ forms the system $\{\lambda(X\times [0,\varepsilon)):\varepsilon>0\}$. The cone $\Delta(X)$ equipped with the topology $\mathcal T_p$ is said to be {\it perfect} and is denoted by $\Delta_p(X)$.

For all $x\in X$ we write
$$
vx=\lambda^x([0,1]).
$$
Obviously,
$$
\Delta(X)=\bigcup\limits_{x\in X}vx.
$$
It is easily seen that $vx\cap vy=\{v\}$ for all distinct $x,y\in X$ and $vx\simeq [0,1]$ for all $x\in X$.

For every $y\in \Delta(X)\setminus \{v\}$ we set
\begin{equation}\label{eq:alpha}
\alpha(y)={\rm pr}_X(\lambda^{-1}(y)).
\end{equation}
Obviously, $\alpha:\Delta(X)\setminus \{v\}\to X$ is continuous in both topologies $\mathcal T$ and $\mathcal T_p$.

Let
\begin{equation}\label{eq:beta}
\beta(y)=\left\{\begin{array}{ll}
                  {\rm pr}_{[0,1]}(\lambda^{-1}(y)), & y\ne v, \\
                  0, & y=v.
                \end{array}
\right.
\end{equation}
Then $\beta:\Delta_p(X)\to [0,1]$ is a continuous function.  Indeed, it is evident that $\beta$ is continuous on $\Delta_p(X)\setminus\{v\}$. Since
\begin{equation}\label{eq:betabase}
\beta^{-1}([0,\varepsilon))=\lambda(X\times [0,\varepsilon))
\end{equation}
for any $\varepsilon>0$, the set $\beta^{-1}([0,\varepsilon))$ is a neighborhood of $v$. Consequently, $\beta$ is continuous at $v$.

We observe that
$$
\lambda(\alpha(y),\beta(y))=y
$$
for all $y\in \Delta(X)\setminus\{v\}$.

\begin{remark}\label{rem} {\rm
\begin{enumerate}
\item The concept of the perfect cone over a separable metrizable space was also defined in~\cite[p.~55]{JVM}.

\item\label{rem:subspace} We observe that $x\mapsto \lambda(x,1)$ is a homeomorphism of $X$ onto $\lambda(X,\times\{1\})\subseteq \Delta_p(X)$. Therefore, we can identify $X$ with its image and consider $X$ as a subspace of $\Delta_p(X)$.

\item\label{rem:retract} In the light of the previous observation we may assume that the mapping $\alpha$ defined by formula~(\ref{eq:alpha}) is a retraction.

\item\label{rem:betabase} The system $\{\beta^{-1}([0,\varepsilon)):\varepsilon>0\}$ is the base of neighborhoods of the vertex of the cone according to~(\ref{eq:betabase}).
\end{enumerate}
 }
\end{remark}

\begin{proposition}\label{prop:CompactPerfectCone}{\rm (cf. \cite[p.~55]{JVM})}
   The cone $\Delta(X)$ over a compact space $X$ is perfect.
\end{proposition}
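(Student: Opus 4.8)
The plan is to show that on the set $\Delta(X)$ the quotient topology $\mathcal T$ and the topology $\mathcal T_p$ coincide; by the definition of $\Delta_p(X)$ this is precisely the assertion that the cone is perfect. Since, by construction, $\mathcal T$ and $\mathcal T_p$ agree on $\Delta(X)\setminus\{v\}=\lambda(X\times(0,1])$, it suffices to compare the two topologies at the single point $v$, i.e. to check that a set is a $\mathcal T$-neighborhood of $v$ if and only if it is a $\mathcal T_p$-neighborhood of $v$.

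First I would dispose of the easy inclusion. For every $\varepsilon>0$ one has $\lambda^{-1}(\lambda(X\times[0,\varepsilon)))=X\times[0,\varepsilon)$, because $\lambda$ is injective on $X\times(0,1]$ and collapses $X\times\{0\}$ to $v$. As $X\times[0,\varepsilon)$ is open in $X\times[0,1]$, the set $\lambda(X\times[0,\varepsilon))$ is $\mathcal T$-open. Thus every basic $\mathcal T_p$-neighborhood of $v$ is already $\mathcal T$-open, which yields $\mathcal T_p\subseteq\mathcal T$ (and off the vertex the two topologies coincide by fiat).

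The substantive step is the reverse inclusion at the vertex, and this is where compactness of $X$ enters. Let $U$ be any $\mathcal T$-open set with $v\in U$. Then $\lambda^{-1}(U)$ is open in $X\times[0,1]$ and, since $\lambda(X\times\{0\})=v\in U$, it contains the entire slice $X\times\{0\}$. Here I would invoke the tube lemma (Wallace's theorem): because $X$ is compact, an open subset of $X\times[0,1]$ containing the slice $X\times\{0\}$ must contain a tube $X\times[0,\varepsilon)$ for some $\varepsilon>0$. Applying $\lambda$ gives $\lambda(X\times[0,\varepsilon))\subseteq U$, so $U$ contains a basic $\mathcal T_p$-neighborhood of $v$; hence $U$ is $\mathcal T_p$-open and $\mathcal T\subseteq\mathcal T_p$.

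Combining the two inclusions gives $\mathcal T=\mathcal T_p$, that is $\Delta(X)=\Delta_p(X)$, so the cone is perfect. The only nonroutine point is the tube-lemma step, and it is exactly there that compactness is indispensable: for the non-compact example $X_1=(0,1)$ the slice $X_1\times\{0\}$ lies inside $\mathcal T$-open sets that contain no tube, which is precisely why the vertex of $\Delta(X_1)$ fails to have a countable neighborhood base and the two topologies genuinely differ.
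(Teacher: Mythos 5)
Your proof is correct and follows essentially the same route as the paper: the whole content is that an open set in $X\times[0,1]$ containing the slice $X\times\{0\}$ must contain a tube $X\times[0,\varepsilon)$, which the paper proves inline (cover the slice by basic boxes $U_x\times[0,\delta_x)$, extract a finite subcover by compactness, take the minimum $\delta$) and you obtain by citing the tube lemma. Your additional remarks --- that the sets $\lambda(X\times[0,\varepsilon))$ are $\mathcal T$-open and that the two topologies agree off the vertex --- are correct and merely make explicit what the paper leaves implicit.
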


\begin{proof}
Let $W$ be an open neighborhood of $v$ in  $\Delta(X)$. Then for every $x\in X$ there exist a neighborhood $U_x$ of  $x$ and $\delta_x>0$ such that $\lambda(U_x\times [0,\delta_x))\subseteq W$. Choose a finite subcover $(U_1,\dots,U_n)$ of $(U_x:x\in X)$ and put $\varepsilon=\min\{\delta_1,\dots,\delta_n\}$. Then $\lambda(X\times [0,\varepsilon))\subseteq W$. Hence, $\Delta(X)$ is the perfect cone.
\end{proof}

\begin{theorem}\label{th:PropertiesOfCones} Let $X$ be a topological space.
\begin{enumerate}
   \item If $X$ is Hausdorff, then $\Delta_p(X)$ is Hausdorff.

    \item If $X$ is regular, then $\Delta_p(X)$ is regular.

   \item\label{PerfectlyNormal} If  $X$ is a countable regular space, then $\Delta_p(X)$ is perfectly normal.

   \item\label{contract} $\Delta_p(X)$  is contractible.

  \item\label{LocConn} If $X$ is locally (arcwise) connected, then: a) $\Delta(X)$ is locally (arcwise) connected;
  b)  $\Delta_p(X)$ is locally (arcwise) connected.

  \item\label{metrizability} If $X$ is metrizable, then $\Delta_p(X)$ is metrizable.
\end{enumerate}
\end{theorem}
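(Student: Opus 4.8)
The plan is to take the six items essentially in order, using throughout two structural facts recorded before the statement: the set $X\times(0,1]$ is an open subspace of $\Delta_p(X)$ on which $\mathcal T_p$ agrees with the product topology, and the function $\beta$ from~(\ref{eq:beta}) is continuous with $\{\beta^{-1}([0,\varepsilon)):\varepsilon>0\}$ a neighbourhood base at $v$. For (1) and (2) I would split into cases according to whether $v$ is involved. Two non-vertex points lie in $X\times(0,1]$, which is Hausdorff (resp.\ regular) as a product of such spaces, and the separating sets are automatically open in $\Delta_p(X)$; for regularity I keep these sets inside some $\beta^{-1}((\delta,1])$, so that their closures cannot acquire $v$ (using $\overline A=\mathrm{cl}_{X\times(0,1]}A$ whenever $\overline A\subseteq X\times(0,1]$). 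To separate $v$ from $y\ne v$ one uses $\beta^{-1}([0,\beta(y)/2))$ and $\beta^{-1}((\beta(y)/2,1])$, and regularity at $v$ follows because $\overline{\beta^{-1}([0,\varepsilon/2))}\subseteq\beta^{-1}([0,\varepsilon/2])\subseteq\beta^{-1}([0,\varepsilon))$, giving a base of closed neighbourhoods.

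For (3) the first remark is that a countable $T_1$ space has every subset simultaneously $F_\sigma$ and $G_\delta$, since points are closed and there are only countably many of them. Next I would show $\Delta_p(X)$ is Lindelöf: the map $\lambda\colon X\times[0,1]\to\Delta_p(X)$ is continuous (the preimage of a basic vertex neighbourhood is $X\times[0,\varepsilon)$) and onto, and $X\times[0,1]$ is Lindelöf as the product of the Lindelöf space $X$ with the compact space $[0,1]$. Being regular by (2) and Lindelöf, $\Delta_p(X)$ is normal. It then remains to check that every open $W$ is $F_\sigma$, which I would do by slicing: enumerating $X=\{x_1,x_2,\dots\}$, each slice $\{t:(x_i,t)\in W\}$ is open in $(0,1]$ and hence a countable union of compacta $K_{ij}\subseteq(0,1]$; each $\{x_i\}\times K_{ij}$ is closed in $\Delta_p(X)$, being closed in $X\times(0,1]$ with $\beta$-values bounded away from $0$, so its closure misses $v$. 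Thus $W\cap(X\times(0,1])$ is $F_\sigma$, and since $\{v\}$ is closed (its complement $X\times(0,1]$ is open) so is $W$.

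For (4) I would contract every point toward the vertex by scaling the height: set $\gamma(v,s)=v$ and $\gamma(y,s)=\lambda(\alpha(y),(1-s)\beta(y))$ for $y\ne v$, so that $\gamma(y,0)=y$, $\gamma(y,1)=v$, and $x^*=v$. Continuity off the vertex is immediate from continuity of $\alpha,\beta,\lambda$; at points sent to $v$ one uses $\beta(\gamma(y,s))=(1-s)\beta(y)\le\beta(y)$, which shows $\gamma$ carries $\beta^{-1}([0,\varepsilon))\times[0,1]$ and, for $s$ near $1$, $\Delta_p(X)\times(1-\varepsilon,1]$ into $\beta^{-1}([0,\varepsilon))$. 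For (5), away from $v$ the space is the product $X\times(0,1]$, locally (arcwise) connected as soon as $X$ is, so only the vertex needs attention. In $\Delta_p(X)$ the basic neighbourhoods $\beta^{-1}([0,\varepsilon))=\lambda(X\times[0,\varepsilon))$ are arcwise connected, since each point can be joined to $v$ by lowering its $\beta$-coordinate to $0$; this gives (5b). For (5a), given open $N\ni v$ in $\Delta(X)$, I would use local connectedness of $X$ to cover $X\times\{0\}$ by tubes $C_x\times[0,\varepsilon_x)\subseteq\lambda^{-1}(N)$ with $C_x$ connected; their union $V$ is saturated and open, and $\lambda(V)$ is a connected (in fact arcwise connected) open neighbourhood of $v$ inside $N$.

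Finally, for (6) I would equip $\Delta_p(X)$ with the Euclidean cone metric: fixing a metric $d\le1$ on $X$, set $\rho(y_1,y_2)=\bigl(\beta(y_1)^2+\beta(y_2)^2-2\beta(y_1)\beta(y_2)\cos(\pi d(\alpha(y_1),\alpha(y_2)))\bigr)^{1/2}$, with the vertex terms read off $\beta(v)=0$. Then $\rho(y,v)=\beta(y)$, so the $\rho$-balls about $v$ are exactly the sets $\beta^{-1}([0,\varepsilon))$, matching the neighbourhood base of $\mathcal T_p$ at the vertex, while away from the vertex $\rho$ is equivalent to the product metric and so induces $\mathcal T_p$ there as well. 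I expect the main obstacle of the whole theorem to be here: verifying that $\rho$ genuinely satisfies the triangle inequality (the capped-angle cone inequality) and that its topology is precisely $\mathcal T_p$. The $F_\sigma$-decomposition in (3) is the other step that requires real care; the remaining items reduce, as above, to the behaviour of $\beta$ near the vertex and to the product structure on $X\times(0,1]$.
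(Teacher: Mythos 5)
Your treatment of items (1), (2), (4) and (5) is correct and is essentially the paper's own argument: split into cases at the vertex, use the product structure of $X\times(0,1]$, and use that $\{\beta^{-1}([0,\varepsilon)):\varepsilon>0\}$ is a neighbourhood base at $v$. The differences there are cosmetic: your contraction $\gamma(y,s)=\lambda(\alpha(y),(1-s)\beta(y))$ is the time-reversal of the paper's $\gamma(y,t)=\lambda(\alpha(y),t\,\beta(y))$ (yours actually matches the stated definition of a contraction literally, the paper's is reversed), and in (5a) you build a saturated open set from tubes $C_x\times[0,\varepsilon_x)$ where the paper uses the (arcwise) components of $\lambda^{-1}(W)$ containing the points $(x,0)$; both produce an open saturated set whose $\lambda$-image is the required neighbourhood. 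Item (3) is a genuinely different route: the paper observes that $\Delta_p(X)=\bigcup_{x\in X}vx$ is a union of countably many copies of $[0,1]$, hence hereditarily Lindel\"of, and then invokes the standard fact that a regular hereditarily Lindel\"of space is perfectly normal; you instead obtain Lindel\"ofness from the continuous surjection $\lambda:X\times[0,1]\to\Delta_p(X)$ and verify perfectness by hand, slicing each open set over the points of $X$. Your slicing works, but note that it needs each singleton $\{x_i\}$ to be closed in $X$ (otherwise $\{x_i\}\times K_{ij}$ need not be closed in $X\times(0,1]$); this is harmless under the Engelking convention the paper follows, where regular includes $T_1$, but the paper's hereditary-Lindel\"of route avoids the issue entirely. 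Also, your opening remark that every subset of a countable $T_1$ space is ambiguous is never used and can be dropped.

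The one genuine gap is in (6). The paper takes $d(\lambda(x,s),\lambda(y,t))=|t-s|+\min\{s,t\}\,\varrho(x,y)$ with $\varrho\le 1$; after discarding the trivial case, its triangle inequality reduces to the elementary estimate $\varrho(x,z)\le 2(1-\tfrac tu)+\tfrac tu\bigl(\varrho(x,y)+\varrho(y,z)\bigr)$, and the topology check is immediate. You instead propose the Euclidean cone metric $\rho(y_1,y_2)=\bigl(\beta_1^2+\beta_2^2-2\beta_1\beta_2\cos(\pi d(\alpha(y_1),\alpha(y_2)))\bigr)^{1/2}$, where $\beta_i=\beta(y_i)$. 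This metric does work (it is the standard Euclidean cone over $(X,\pi d)$, whose angles lie in $[0,\pi]$), and your topology checks are right: $\rho(y,v)=\beta(y)$ gives the correct base at the vertex, and the identity $\rho^2=(\beta_1-\beta_2)^2+2\beta_1\beta_2\bigl(1-\cos(\pi d)\bigr)$ gives local equivalence with the product metric away from $v$. But the triangle inequality for $\rho$ is precisely the point you flag and do not prove, and it is not a routine computation: it requires the planar comparison lemma for cones (the fact that the Euclidean cone over a metric space with distances truncated at $\pi$ is again a metric space). As a self-contained proof this step must either be carried out or cited; alternatively, replacing $\rho$ by a sum-type metric such as the paper's makes the verification a few lines --- which is evidently why the paper chose it. So your plan is sound, but as written item (6) rests on an unproven (though true) inequality that is harder than the paper's entire proof of that item.
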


\begin{proof} 1). Let $x,y\in \Delta_p(X)$ and $x\ne y$. Since $\Delta_p(X)\setminus\{v\}$ is homeomorphic to the Hausdorff space $X\times (0,1]$, it is sufficient to consider the case $x=v$ or $y=v$. Assume that  $x=v$ and $y\ne v$. Then \mbox{$0=\beta(x)<\beta(y)\le 1$}, where $\beta$ is defined by formula (\ref{eq:beta}). Set $O_x=\lambda(X\times [0,\beta(y)/2))$ and $O_y=\lambda(X\times (\beta(y)/2,1])$.
Then $O_x$ and $O_y$ are disjoint neighborhoods of $x$ and $y$ in  $\Delta_p(X)$, respectively.
\medskip

2). Fix $y\in Y$ and a closed set $F\subseteq \Delta_p(X)$ such that  $y\not\in F$. Since $X\times (0,1]$ is regular, the case $y\ne v$ and $v\not\in F$ is obvious.

Let $y=v$. Choose $\varepsilon>0$ such that $F\cap\lambda(X\times [0,\varepsilon))=\O$. Then $U=\lambda(X\times [0,\varepsilon/2))$ and $V=\lambda(X\times (\varepsilon/2,1])$ are disjoint open neighborhoods of $v$ and $F$ in $\Delta_p(X)$, respectively.

Now let $y\ne v$ and $v\in F$. We take $\varepsilon>0$ such that $O_v=\beta^{-1}([0,\varepsilon))$ is an open neighborhood of $v$ with $y\not\in\overline O_v$. Moreover, since $\Delta_p(X)\setminus \{v\}$ is regular, there exists an open neighborhood $O_y$ of $y$ in $\Delta_p(X)$ with $\overline{O_y}\cap G\subseteq \Delta_p\setminus F$. Then $U=O_y\setminus\overline{O_v}$ is an open neighborhood of $y$ in $\Delta_p(X)$ such that $\overline U\cap F=\emptyset$.

\medskip

3). We observe that $\Delta_p(X)$ is regular by the previous proposition. Moreover,  $Y$ is hereditarily Lindel\"{o}f (and, consequently, normal) as the union of countably many homeomorphic copies of $[0,1]$. Hence, $\Delta_p(X)$ is perfectly normal.

\medskip

4). For all $y\in \Delta_p(X)$ and  $t\in [0,1]$ define
\begin{equation}\label{eq:gamma}
\gamma(y,t)=\left\{\begin{array}{ll}
                     \lambda(\alpha(y), t\cdot \beta(y)), & y\ne v, \\
                     v, & y=v,
                   \end{array}
\right.
\end{equation}
where $\alpha$ and $\beta$ are defined by~(\ref{eq:alpha}) and (\ref{eq:beta}), respectively. Then $\gamma(y,0)=v$ and $\gamma(y,1)=y$ for all $y\in Y$.
Clearly, $\gamma$ is continuous on $(Y\setminus\{v\})\times [0,1]$. Let $\varepsilon>0$ and $W=\lambda(X\times [0,\varepsilon))=\beta^{-1}([0,\varepsilon))$. Since $\beta(\gamma(y,t))=t\cdot\beta(y)$, $\gamma(W\times [0,1])\subseteq W$. Hence, $\gamma$ is continuous at each point of the set $\{v\}\times [0,1]=\beta^{-1}(0)$.
\medskip

5a). Notice that the space $\Delta(X)\setminus\{v\}$ is locally (arcwise) connected, since it is homeomorphic to the locally (arcwise) connected space $X\times (0,1]$. Let us check that $\Delta(X)$ is locally (arcwise) connected at $v$. Fix an open neighborhood $W$ of $v$ in $\Delta(X)$. Then $V=\lambda^{-1}(W)$ is open in $X\times [0,1]$ and $X\times \{0\}\subseteq V$. For every $x\in X$ we denote by $G_x$ the (arcwise) component of $V$ with $(x,0)\in G_x$. Then $G_x$ is open in $\Delta(X)\setminus\{v\}$ and, consequently, in  $\Delta(X)$. Let $G=\bigcup\limits_{x\in X}G_x$. Then the set $\lambda(G)$ in open neighborhood of $v$ such that $\lambda(G)\subseteq W$. It remains to observe that $\lambda(G)$ is (arcwise) connected, since $v\in\lambda(G_x)$ and  $\lambda(G_x)$ is (arcwise) connected for all $x\in X$.
\medskip

5b). Let $\varepsilon>0$ and $W=\beta^{-1}([0,\varepsilon))$. Since each element of $W$ can be joined by a segment with $v$, $W$ is an arcwise connected neighborhood of $v$.
\medskip

6). Let $\varrho$ be a metric generating the topology of $X$ with $\varrho\le 1$. For all $x,y\in X$ and $s,t\in[0,1]$ we set
$$
d(\lambda(x,s),\lambda(y,t))=|t-s|+\min\{s,t\}\varrho(x,y).
$$
Then $d$ is a correctly defined, symmetric, nonnegative and nondegenerate mapping of $\Delta_p(X)\times\Delta_p(X)$. Moreover, the triangle inequality is satisfied, i.e.
$$
d(\lambda(x,s),\lambda(z,u))\le d(\lambda(x,s),\lambda(y,t))+d(\lambda(y,t),\lambda(z,u))
$$
for all $(x,s), (y,t), (z,u)\in X\times [0,1]$. If $t\ge\min\{s,u\}$, the inequality is obvious. Let $t<\min\{s,u\}$. Without loss of generality, we may assume that $t<u\le s$. Then the above inequality is equivalent to
$$
s-u+u\varrho(x,z)\le s-t+t\varrho(x,y)+u-t+t\varrho(y,z),
$$
i.e.,
$$
\varrho(x,z)\le 2(1-\frac tu)+\frac tu(\varrho(x,y)+\varrho(y,z)),
$$
which does hold since $\varrho(x,z)\le 2$ and $\varrho(x,z)\le \varrho(x,y)+\varrho(y,z)$. Moreover, $d$ generates the topology of the perfect cone. It is obvious that the $d$-neighborhoods of $v$ are the correct ones and the metric $d$ on $\lambda(X\times (0,1])$ is equivalent to the summing metric inherited from $X\times(0,1]$.
\end{proof}

A subset $E$ of a topological vector space $X$ is {\it bounded} if for any neighborhood of zero $U$ there is such $\gamma>0$ that $E\subseteq\delta U$ for all $|\delta|\ge\gamma$.

\begin{proposition}\label{prop:emb}
  Let $Z$ be a topological vector space and $X\subseteq Z$ be a bounded set. Then $\Delta_p(X)$ is embedded to $Z\times\mathbb R$.
\end{proposition}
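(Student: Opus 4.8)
The plan is to write down an explicit embedding and to localize the whole difficulty in continuity at the cone vertex, which is exactly where the boundedness of $X$ is used. I would define $\Phi:\Delta_p(X)\to Z\times\mathbb R$ by
$$
\Phi(y)=\begin{cases}(\beta(y)\cdot\alpha(y),\,\beta(y)), & y\ne v,\\ (0,0), & y=v,\end{cases}
$$
where $\alpha$ and $\beta$ are the maps from~(\ref{eq:alpha}) and~(\ref{eq:beta}), and $\beta(y)\cdot\alpha(y)$ denotes scalar multiplication in $Z$ (note $\alpha(y)\in X\subseteq Z$). In the coordinates of the quotient mapping this reads $\Phi(\lambda(x,t))=(tx,t)$, which makes $\Phi$ well defined, since the first coordinate is $0$ regardless of $x$ when $t=0$. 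The first thing to verify is injectivity: if $(sx,s)=(ty,t)$ then $s=t$, and either both points are $v$, or $s=t>0$ and $sx=sy$ forces $x=y$ because multiplication by a nonzero scalar is invertible in a vector space. The second coordinate is indispensable here: without it the points $\lambda(x,1)$ and $\lambda(2x,\tfrac12)$ (when $x,2x\in X$) would collapse.

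Continuity of $\Phi$ on $\Delta_p(X)\setminus\{v\}$ is routine, since this set is homeomorphic to $X\times(0,1]$ and $(x,t)\mapsto(tx,t)$ is continuous by continuity of scalar multiplication $\mathbb R\times Z\to Z$. The crux is continuity at $v$. A basic neighborhood of $\Phi(v)=(0,0)$ has the form $U\times(-\eta,\eta)$ with $U$ a neighborhood of $0$ in $Z$ and $\eta>0$; by Remark~\ref{rem}(\ref{rem:betabase}) it suffices to produce $\varepsilon>0$ with $\Phi(\lambda(X\times[0,\varepsilon)))\subseteq U\times(-\eta,\eta)$, i.e. $tx\in U$ for all $x\in X$ and all $t\in[0,\varepsilon)$. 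This is precisely what boundedness supplies: applied to $U$ it yields $\gamma>0$ with $X\subseteq\delta U$ whenever $|\delta|\ge\gamma$, and taking $\delta=1/t$ for $0<t\le 1/\gamma$ gives $tX\subseteq U$, while $tX=\{0\}\subseteq U$ when $t=0$. Hence $\varepsilon=\min\{\eta,1/\gamma\}$ works and $\Phi$ is continuous at $v$.

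It remains to show that $\Phi^{-1}$ is continuous on the image $M=\Phi(\Delta_p(X))$. Away from $(0,0)$ this is again routine: if $(w,t)\in M$ with $t>0$, then $w=tx$ for the unique $x=t^{-1}w\in X$, so $\Phi^{-1}(w,t)=\lambda(t^{-1}w,t)$, and $(w,t)\mapsto t^{-1}w$ is continuous on $\{t>0\}$ by continuity of $t\mapsto t^{-1}$ together with scalar multiplication, while $\lambda$ is a homeomorphism off the vertex. Continuity of $\Phi^{-1}$ at $(0,0)$ is the easy direction, because the second coordinate of $\Phi$ recovers $\beta$: given the basic neighborhood $\lambda(X\times[0,\varepsilon))=\beta^{-1}([0,\varepsilon))$ of $v$, the set $(Z\times(-\varepsilon,\varepsilon))\cap M$ is carried by $\Phi^{-1}$ into it. Combining these facts shows $\Phi$ is a homeomorphism onto $M$, i.e. $\Delta_p(X)$ embeds into $Z\times\mathbb R$.

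The single genuine obstacle is the vertex-continuity of $\Phi$, and I expect the step above — converting the abstract boundedness hypothesis into the uniform statement ``$tX\subseteq U$ for all small $t\ge 0$'' — to be the heart of the argument; everything else is either formal or an immediate consequence of the axioms of a topological vector space.
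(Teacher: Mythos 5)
Your proof is correct and follows essentially the same route as the paper's: both use the explicit map $\lambda(x,t)\mapsto(tx,t)$ onto $C=\{(tx,t):x\in X,\ t\in[0,1]\}$, observe that it is a homeomorphism off the vertex, and invoke boundedness of $X$ exactly once, to show that the sets $(Z\times(-\varepsilon,\varepsilon))\cap C$ form a neighborhood base at $(0,0)$ corresponding to the base $\beta^{-1}([0,\varepsilon))$ at $v$. Your write-up merely makes explicit some steps (injectivity, two-sided continuity) that the paper leaves implicit.
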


\begin{proof}
  Consider the set $C=\{(xt,t):x\in X, t\in [0,1]\}$. Let $\varphi(x,t)=(xt,t)$ for all $(x,t)\in X\times [0,1]$ and $v^*=(0,0)\in X\times [0,1]$. Then the restriction $\varphi|_{X\times (0,1]}$ is a homeomorphism onto $C\setminus\{v^*\}$. Moreover, the mapping $\beta:C\to [0,1]$, $\beta(z,t)=t$, is continuous. Therefore, $\beta^{-1}([0,\varepsilon))$ is an open neighborhood of $v^*$ in $C$ for any $\varepsilon>0$. Now we show that the system $\{\beta^{-1}([0,\varepsilon)):\varepsilon>0\}$ is a base of $v^*$. Take an open neighborhood of zero in $Z$,  $\delta>0$  and let $W=U\times (-\delta,\delta)$. Choose $\varepsilon\in (0,\delta)$ such that $tX\subseteq U$ for all $t$ with $|t|<\varepsilon$. Then for each $y=(z,t)\in\beta^{-1}([0,\varepsilon))$ we have $|t|<\delta$ and $z\in tx\subseteq U$. Consequently, $\beta^{-1}([0,\varepsilon))\subseteq W$. Hence, $C$ is homeomorphic to the perfect cone $\Delta_p(X)$.
\end{proof}

The following result easily follows from \cite[Theorem 1.5.9]{JVM}.

\begin{corollary}\label{th:ConeOverFiniteSet}
  The cone $\Delta_p(X)$ over a finite Hausdorff space $X$ is an absolute retract.
\end{corollary}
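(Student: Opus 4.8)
The plan is to reduce the statement to the classical theory of absolute retracts for finite polyhedra. Since $X$ is a finite Hausdorff space, it carries the discrete topology, and hence is compact and metrizable. By Proposition~\ref{prop:CompactPerfectCone} the cone $\Delta(X)$ is already perfect, so $\Delta_p(X)=\Delta(X)$; moreover, by items~(\ref{contract}), (\ref{LocConn}) and~(\ref{metrizability}) of Theorem~\ref{th:PropertiesOfCones}, the space $\Delta_p(X)$ is contractible, locally arcwise connected and metrizable. Writing $X=\{x_1,\dots,x_n\}$, one sees that $\Delta_p(X)$ is nothing but the wedge of $n$ copies of $[0,1]$ glued along the common endpoint $v$, i.e. a finite star.

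The next step is to record that this star is a compact one-dimensional polyhedron sitting in a Euclidean space. Since a finite set is bounded, Proposition~\ref{prop:emb} realizes $\Delta_p(X)$ as the set $C=\{(xt,t):x\in X,\ t\in[0,1]\}\subseteq\mathbb R^{n}\times\mathbb R$, which is the union of the $n$ segments joining the origin to the points $(x_i,1)$ and meeting pairwise only at the origin. In particular $\Delta_p(X)$ is a compact, finite-dimensional, metrizable space that is contractible (as just noted) and locally contractible: a neighbourhood of $v$ is a smaller star, which contracts to $v$, while every other point has a neighbourhood homeomorphic to an interval.

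Finally I would feed these properties into \cite[Theorem~1.5.9]{JVM}, which asserts that a space of this type is an absolute retract; all of its hypotheses were verified in the two preceding paragraphs, so $\Delta_p(X)$ is an absolute retract. The only genuine point of the argument is to match the list of verified properties with the hypotheses of the cited theorem, which essentially amounts to recalling that a finite polyhedron is a compact ANR and that a contractible ANR is an absolute retract. There is therefore no substantial obstacle, and the corollary is, as announced, an easy consequence of~\cite[Theorem~1.5.9]{JVM}.
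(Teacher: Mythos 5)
Your proposal is correct and takes essentially the same route as the paper: the paper's entire proof consists of the single remark that the corollary ``easily follows from \cite[Theorem~1.5.9]{JVM}'', which is exactly the theorem you invoke in your final step. Your preliminary verifications (a finite Hausdorff space is discrete, the cone is already perfect by Proposition~\ref{prop:CompactPerfectCone}, the star structure realized in Euclidean space via Proposition~\ref{prop:emb}, compactness, contractibility and local contractibility) simply make explicit what the paper leaves to the reader before citing the same result.
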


\section{Weak $B_1$-retracts}

A subset $E$ of a topological space  $X$ is called {\it a weak $B_1$-retract} of $X$ if there exists a sequence of continuous mappings $r_n:X\to E$ such that $\lim\limits_{n\to\infty}r_n(x)=x$ for all $x\in E$. Clearly, every $B_1$-retract is a weak $B_1$-retract. The converse proposition is not true (see Example~\ref{ex:weakB1}).

A space $X$ is called {\it an absolute weak $B_1$-retract} if for any space $Y$ and for any homeomorphic embedding $h:X\to Y$ the set $h(X)$ is a weak $B_1$-retract of~$Y$.

Let $E=\bigcup\limits_{n=1}^\infty E_n$ and let $(r_n)_{n=1}^\infty$ be a sequence of retractions $r_n:X\to E_n$. If the sequence $(E_n)_{n=1}^\infty$ is increasing then $\lim\limits_{n\to\infty}r_n(x)=x$ for every $x\in E$. Thus, have proved the following fact.

\begin{proposition}\label{prop:SigmaRetract}
  Every $\sigma$-retract of a topological space $X$ is a weak $B_1$-retract of $X$.
\end{proposition}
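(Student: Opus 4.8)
The plan is to use the data defining a $\sigma$-retract directly to manufacture the sequence of continuous maps required by the definition of a weak $B_1$-retract. By hypothesis we may write $E=\bigcup_{n=1}^\infty E_n$, where $(E_n)_{n=1}^\infty$ is an increasing sequence of retracts of $X$, and for each $n$ we fix a retraction $r_n:X\to E_n$. Since $E_n\subseteq E$, composing each $r_n$ with the inclusion $E_n\hookrightarrow E$ yields a continuous map into $E$, which I will again denote by $r_n:X\to E$. These are the candidates for the sequence in the definition of a weak $B_1$-retract.

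Next I would verify the pointwise convergence $\lim_{n\to\infty}r_n(x)=x$ for every $x\in E$. Fix $x\in E$. Because $E=\bigcup_n E_n$, there is an index $N$ with $x\in E_N$. The essential use of the hypothesis is that the sequence $(E_n)$ is increasing: this gives $x\in E_n$ for all $n\ge N$. Since $r_n$ is a retraction of $X$ onto $E_n$, it fixes every point of $E_n$, so $r_n(x)=x$ whenever $n\ge N$. Hence the sequence $(r_n(x))_{n\ge 1}$ is eventually equal to $x$, and in particular converges to $x$ in $E$.

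Combining the two steps, the sequence $(r_n)$ witnesses that $E$ is a weak $B_1$-retract of $X$, and the argument is complete. The only point that needs any care — and it is a mild one — is the monotonicity of $(E_n)$: without it one would know only that $r_n$ fixes $x$ for the single index $N$, and eventual constancy (hence convergence) could fail. Since increasingness is built into the definition of a $\sigma$-retract, there is no genuine obstacle, and the proof is essentially immediate.
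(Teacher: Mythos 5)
Your proof is correct and follows exactly the same route as the paper's: take retractions $r_n:X\to E_n$, view them as maps into $E$, and use the increasingness of $(E_n)_{n=1}^\infty$ to conclude that $r_n(x)=x$ eventually for each $x\in E$, hence $r_n(x)\to x$. The paper states this argument more tersely (it is given in the two sentences preceding the proposition), but the content is identical.
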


\begin{proposition}\label{prop:weak}
 Let $X$ be a countable Hausdorff space. Then $\Delta_p(X)$ is an absolute weak $B_1$-retract.
\end{proposition}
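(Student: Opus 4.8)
The plan is to exhibit $h(\Delta_p(X))$ as a $\sigma$-retract of the ambient space and then quote Proposition~\ref{prop:SigmaRetract}. First I would fix an enumeration $X=\{x_1,x_2,\dots\}$ (if $X$ is finite the same construction applies and is in fact simpler, so I treat the countably infinite case) and set $X_n=\{x_1,\dots,x_n\}$ and $C_n=\lambda(X_n\times[0,1])$. I would verify the two structural facts that drive everything: the sequence $(C_n)$ is increasing with $\bigcup_{n=1}^\infty C_n=\Delta_p(X)$ (every point is either $v\in C_1$ or some $\lambda(x_k,t)\in C_n$ for $n\ge k$), and each $C_n$, with the topology inherited from $\Delta_p(X)$, is homeomorphic to the perfect cone $\Delta_p(X_n)$ over the finite Hausdorff space $X_n$. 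The homeomorphism is immediate on $\lambda(X_n\times(0,1])$, where both topologies are the product topology, and at the vertex, where in both spaces a neighbourhood base is given by the traces of the sets $\beta^{-1}([0,\varepsilon))$. Since $X_n\times[0,1]$ is compact, $C_n=\lambda(X_n\times[0,1])$ is compact, and by Corollary~\ref{th:ConeOverFiniteSet} it is an absolute retract.

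Now I would fix the embedding $h$ of $\Delta_p(X)$ into the ambient space $Y$. The images $h(C_n)$ form an increasing sequence of compact sets with $\bigcup_{n=1}^\infty h(C_n)=h(\Delta_p(X))$, so it suffices to produce, for each $n$, a retraction $\rho_n\colon Y\to h(C_n)$; then $h(\Delta_p(X))$ is a $\sigma$-retract of $Y$ and Proposition~\ref{prop:SigmaRetract} finishes the proof. To build $\rho_n$ I would use that $h(C_n)$ is a compact metrizable absolute retract, hence a retract of the Hilbert cube $Q=[0,1]^\omega$: composing the homeomorphism $h(C_n)\to\Delta_p(X_n)$ with an embedding–retraction pair for $Q$ gives an embedding $e\colon h(C_n)\to Q$ and a map $s\colon Q\to h(C_n)$ with $s\circ e=\mathrm{id}$. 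Since $h(C_n)$ is compact it is closed in $Y$, and extending $e$ coordinatewise by the Tietze theorem to a map $F\colon Y\to Q$ yields the retraction $\rho_n:=s\circ F$ of $Y$ onto $h(C_n)$, because $\rho_n|_{h(C_n)}=s\circ e=\mathrm{id}$.

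The step I expect to be the main obstacle, and the one deserving care, is precisely this retraction step, since it is where the separation properties of $Y$ are used: the coordinatewise Tietze extension requires $Y$ to be normal, and the identification of the compact set $h(C_n)$ as a \emph{closed} subset of $Y$ requires a Hausdorff-type axiom (compactness alone does not force closedness without separation). Thus the argument is naturally carried out for $Y$ in the intended class of ambient spaces — the perfectly normal spaces, exactly as in the definition of absolute $B_1$-retract — rather than for a literally arbitrary $Y$; for a badly non-separated ambient space (e.g. one with a generic point) every continuous map into a nondegenerate Hausdorff cone is constant, so no nonconstant weak $B_1$-retraction could exist and the hypothesis on $Y$ cannot be dropped entirely.
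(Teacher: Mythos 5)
Your proposal is correct and takes essentially the same route as the paper's own proof: enumerate $X$, write $\Delta_p(X)=\bigcup_{n=1}^\infty\Delta_p(X_n)$ with $X_n=\{x_1,\dots,x_n\}$, apply Corollary~\ref{th:ConeOverFiniteSet} to each finite cone, and conclude via Proposition~\ref{prop:SigmaRetract}. The extra details you supply --- the identification $\lambda(X_n\times[0,1])\simeq\Delta_p(X_n)$, the Hilbert-cube/Tietze construction of the retraction onto a compact absolute retract, and the caveat that this forces the ambient space to be Hausdorff and normal (so the proposition should be read with the ambient class restricted, e.g.\ to perfectly normal spaces) --- only make explicit what the paper's three-line proof tacitly assumes, since Corollary~\ref{th:ConeOverFiniteSet} yields a retraction of $Z$ only under such separation hypotheses.
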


\begin{proof} Assume that $\Delta_p(X)$ is a subspace of a topological space $Z$.
   Let $X=\{x_n:n\in\mathbb N\}$ and $X_n=\{x_1,\dots,x_n\}$. Then $\Delta_p(X)=\bigcup\limits_{n=1}^\infty\Delta_p(X_n)$ and every $\Delta_p(X_n)$ is a retract of $Z$ by Corollary~\ref{th:ConeOverFiniteSet}. Then $\Delta_p(X)$ is a weak $B_1$-retract of $Z$ by  Proposition~\ref{prop:SigmaRetract}.
\end{proof}

It was proved in~\cite{KRaex} that a $B_1$-retract of a connected space is connected. It turns out that this is still valid for weak  $B_1$-retracts.

\begin{theorem}
  Let $X$ be a connected space. Then any weak $B_1$-retract $E$ of $X$ is connected.
\end{theorem}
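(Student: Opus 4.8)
The plan is to argue by contradiction, using the connectedness of $X$ to constrain where the retracting maps can land. Suppose $E$ were disconnected, so that $E = A \cup B$ for some nonempty, disjoint sets $A$ and $B$ that are both open (and hence, being complements of one another in $E$, clopen) in the subspace $E$. Let $(r_n)_{n=1}^\infty$ be a sequence of continuous maps $r_n:X\to E$ witnessing that $E$ is a weak $B_1$-retract, so that $r_n(x)\to x$ for every $x\in E$.

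First I would observe that, for each fixed $n$, the preimages $r_n^{-1}(A)$ and $r_n^{-1}(B)$ are open in $X$ (since $A$ and $B$ are open in $E$ and $r_n$ is continuous), are disjoint, and cover $X$ because $r_n$ takes values in $E=A\cup B$. As $X$ is connected, one of these preimages must be empty; that is, for each $n$ we have either $r_n(X)\subseteq A$ or $r_n(X)\subseteq B$. Thus each map $r_n$ is ``trapped'' entirely inside one of the two clopen pieces of $E$.

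Next I would exploit the pointwise convergence at a point chosen from each piece. Fix $a\in A$ and $b\in B$. Since $r_n(a)\to a$ and $A$ is a neighbourhood of $a$ in $E$, there is $N_1$ with $r_n(a)\in A$ for all $n\ge N_1$; for such $n$ the dichotomy above forces $r_n(X)\subseteq A$. Symmetrically, $r_n(b)\to b$ with $B$ a neighbourhood of $b$ yields $N_2$ such that $r_n(X)\subseteq B$ for all $n\ge N_2$. Choosing any $n\ge\max\{N_1,N_2\}$ gives $r_n(X)\subseteq A\cap B=\emptyset$, which is impossible since $X$, and hence its image $r_n(X)$, is nonempty. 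This contradiction shows that $E$ must be connected.

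The argument is short, and the only genuinely essential point—really the crux—is the first step: using connectedness of $X$ to force every $r_n$ to land in a single clopen piece of $E$. Everything after that is a routine ``eventually inside a neighbourhood'' consequence of pointwise convergence, so I expect no real obstacle. The one thing to keep in mind is the degenerate case in which $E$ (or $X$) is empty, which is handled either by the usual convention that the empty space is connected or simply by restricting attention to nonempty spaces.
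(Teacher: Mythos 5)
Your proof is correct. It shares the crux with the paper's argument --- connectedness of $X$ forces each image $r_n(X)$ to lie entirely in one piece of any disconnection, and pointwise convergence at a point chosen from each piece then traps $r_n(X)$ in both pieces simultaneously, a contradiction --- but the overall architecture is different. You apply this mechanism directly to a hypothetical disconnection $E=A\cup B$, whereas the paper first introduces the auxiliary set $H=\bigcup_{n=1}^\infty r_n(X)$, proves $H$ is connected by running the same trapping argument on a disconnection of $H$ (there justified via connectedness of each $r_n(X)$ rather than via openness of preimages, as you do), and then concludes by observing $H\subseteq E\subseteq\overline{H}$ and invoking the fact that any set squeezed between a connected set and its closure is connected. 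Your route is shorter and more elementary: it needs no auxiliary set and no closure/sandwich lemma, only the subspace topology on $E$ and the definition of convergence. What the paper's detour buys is the slightly stronger byproduct that the union of the images $H$ is itself connected and $E$ sits between $H$ and $\overline{H}$; for the theorem as stated, your direct argument suffices, and your handling of the degenerate empty case is a reasonable extra precaution.
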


\begin{proof}
 Let $(r_n)_{n=1}^\infty$ be a sequence of continuous mappings $r_n:X\to E$ such that $\lim\limits_{n\to\infty}r_n(x)=x$ for all $x\in E$.
Denote $H=\bigcup\limits_{n=1}^\infty r_n(X)$. We show that $H$ is connected. Conversely, suppose that $H=H_1\cup H_2$, where $H_1$ and $H_2$ are disjoint sets which are closed in $H$. Observe that each set $B_n=r_n(X)$ is connected. Then $B_n\subseteq H_1$ or $B_n\subseteq H_2$. Choose an arbitrary $x\in H_1$. Then there exists a number $n_1$ such that $r_n(x)\in H_1$ for all $n\ge n_1$. Hence, $B_n\subseteq H_1$ for all $n\ge n_1$. Similarly, there exists a number  $n_2$ such that $B_n\subseteq H_2$ for all $n\ge n_2$. Therefore,  $B_n\subseteq H_1\cap H_2$ for all $n\ge \max\{n_1,n_2\}$, which is impossible.

It is easy to see that $H\subseteq E\subseteq \overline{H}$. Since $H$ and $\overline{H}$ are connected, $E$ is connected too.
\end{proof}

\begin{lemma}\label{l1} Let $X$ be a normal space, $Y$ be a contractible space, $(F_i)_{i=1}^n$ be a sequence of disjoint closed subsets of $X$ and let $g_i:X\to Y$ be a continuous mapping for every $1\le i\le n$. Then there exists a continuous mapping \mbox{$g:X\to Y$} such that
$g(x)=g_i(x)$ on $F_i$ for every $1\le i\le n$.
\end{lemma}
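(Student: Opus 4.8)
The plan is to reduce the whole statement to the case of two closed sets and then to glue through the base point of a contraction. To this end, fix a contraction $\gamma:Y\times[0,1]\to Y$ with $\gamma(y,0)=y$ and $\gamma(y,1)=y^*$ for all $y\in Y$, where $y^*$ is the point supplied by contractibility. The only structure I may use on $Y$ is this contraction, so every ``movement'' between prescribed values must be performed along $\gamma$.

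First I would settle the case $n=2$: given disjoint closed sets $A,B\subseteq X$ and continuous maps $f,g:X\to Y$, I want a continuous $h:X\to Y$ with $h|_A=f|_A$ and $h|_B=g|_B$. Since $X$ is normal, Urysohn's lemma provides a continuous $\varphi:X\to[0,1]$ with $\varphi|_A=0$ and $\varphi|_B=1$, and I define
$$
h(x)=\begin{cases}\gamma\bigl(f(x),\,2\varphi(x)\bigr), & \varphi(x)\le \tfrac{1}{2},\\[2pt] \gamma\bigl(g(x),\,2-2\varphi(x)\bigr), & \varphi(x)\ge \tfrac{1}{2}.\end{cases}
$$
On the overlap $\{\varphi=\tfrac12\}$ both branches equal $\gamma(\cdot,1)=y^*$, so the two closed sets $\{\varphi\le\tfrac12\}$ and $\{\varphi\ge\tfrac12\}$ carry continuous maps that agree on their intersection; the pasting lemma then yields continuity of $h$, while evaluating at $\varphi=0$ and $\varphi=1$ gives the required boundary behaviour. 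Intuitively, $h$ traces the route from $f(x)$ to $y^*$ and then from $y^*$ to $g(x)$ as $\varphi(x)$ sweeps from $0$ to $1$, which is exactly what contractibility makes available.

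Then I would run an induction on $n$. The case $n=1$ is trivial, taking $g=g_1$. For the inductive step I apply the hypothesis to $F_1,\dots,F_{n-1}$ to obtain a continuous $\tilde g$ with $\tilde g|_{F_i}=g_i|_{F_i}$ for $i<n$, and then invoke the two-set case with $A=\bigcup_{i<n}F_i$ (closed as a finite union of closed sets), $B=F_n$, $f=\tilde g$, and $g=g_n$. Because $F_i\subseteq A$ for every $i<n$, the resulting map agrees with $g_i$ on $F_i$ for all $i\le n$, completing the induction.

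The only place where genuine care is needed is the gluing in the two-set case: one must verify that the two prescriptions meet continuously, and this is precisely the reason the interpolation is routed through the single point $y^*$ rather than attempting to combine $f$ and $g$ directly — with no algebraic operation on $Y$ at hand, the contraction is the sole mechanism for passing continuously between function values, and forcing both branches to coincide at $y^*$ is what the pasting lemma requires. Everything else, namely the reduction to two sets, the appeal to Urysohn's lemma, and the bookkeeping over the $F_i$, is routine.
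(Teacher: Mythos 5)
Your proposal is correct and follows essentially the same route as the paper: the same Urysohn-plus-contraction construction for $n=2$ (the paper packages the two-branch formula as a map $h:Y\times Y\times[0,1]\to Y$ with $h(x,y,0)=x$, $h(x,y,1)=y$ and then composes with $(g_1,g_2,\varphi)$, which is exactly your glued definition), followed by the identical induction merging $F_1\cup\dots\cup F_{n-1}$ with $F_n$.
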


\begin{proof} Let $y^*\in Y$ and $\gamma:Y\times [0,1]\to Y$ be a continuous mapping such that $\gamma(y,0)=y$ and $\gamma(y,1)=y^*$ for all $y\in Y$.
 For all $x,y\in Y$ and $t\in[0,1]$ define
$$
h(x,y,t)=\left\{\begin{array}{ll}
                  \gamma(x,2t), & 0\le t\le 1/2, \\
                  \gamma(y,-2t+2), & 1/2<t\le 1.
                \end{array}
\right.
$$
Then the mapping $h:Y\times Y\times [0,1]\to Y$ is continuous, $h(x,y,0)=x$ and $h(x,y,1)=y$.

Let $n=2$. By Urysohn's Lemma there is a continuous function  $\varphi:X\to
[0,1]$ such that $\varphi(x)=0$ on $F_1$ and $\varphi(x)=1$ on $F_2$. For all $x\in X$ let $$g(x)=h(g_1(x),g_2(x),\varphi(x)).$$ Clearly, $g:X\to Y$ is continuous and  $g(x)=g_1(x)$ if $x\in F_1$, and $g(x)=g_2(x)$ if $x\in F_2$.

Assume the assertion of the lemma is true for $k$ sets, where  $k=1,\dots,n-1$, and prove it for $n$ sets. According to our assumption, there exists a continuous mapping $\tilde{g}:X\to Y$ such that $\tilde{g}|_{F_i}=g_i$ for every $i=1,\dots,n-1$. Since the sets
$F=\bigcup\limits_{i=1}^{n-1}F_i$ and $F_n$ are closed and disjoint, there exists a continuous mapping  $g:X\to Y$ such that $g|_F=\tilde{g}$ and
$g|_{F_n}=g_n$. Then $g|_{F_i}=g_i$ for every $1\le i\le n$.
\end{proof}

\begin{theorem}\label{th:ContrWeakB1}
  Let $E$ be a contractible ambiguous weak $B_1$-retract of a normal space $X$. Then $E$ is a $B_1$-retract of $X$.
\end{theorem}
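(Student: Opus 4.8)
The plan is to combine the weak $B_1$-retraction on $E$ with a collapse to the contraction point outside $E$, gluing the two pieces by means of the contractibility of $E$ via Lemma~\ref{l1}. I would write the weak retraction as a sequence of continuous maps $r_n\colon X\to E$ with $r_n(x)\to x$ for every $x\in E$, and let $\gamma\colon E\times[0,1]\to E$ be a contraction with $\gamma(y,1)=x^*$ for all $y\in E$. Using that $E$ is ambiguous, I would fix two increasing sequences of closed subsets of $X$: one with $\bigcup_{n} F_n=E$ (from $E$ being $F_\sigma$) and one with $\bigcup_n C_n = X\setminus E$ (from $X\setminus E$ being $F_\sigma$, since $E$ is $G_\delta$). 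For each $n$ the sets $F_n\subseteq E$ and $C_n\subseteq X\setminus E$ are disjoint and closed in the normal space $X$.

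Next, for each $n$ I would apply Lemma~\ref{l1} to the two disjoint closed sets $F_n,C_n$ and to the two continuous maps $r_n\colon X\to E$ and the constant map with value $x^*$, obtaining a continuous $g_n\colon X\to E$ with $g_n|_{F_n}=r_n$ and $g_n|_{C_n}\equiv x^*$; this is precisely where the contractibility of $E$ is used. It then remains to check that $(g_n)_{n=1}^\infty$ converges pointwise on all of $X$. For $x\in E$ we have $x\in F_n$ for all sufficiently large $n$, so $g_n(x)=r_n(x)$ eventually, and hence $g_n(x)\to x$; for $x\in X\setminus E$ we have $x\in C_n$ for all large $n$, so $g_n(x)=x^*$ eventually, and $g_n(x)\to x^*$. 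Thus $g_n\to r$ pointwise on $X$, where $r|_E=\mathrm{id}$ and $r|_{X\setminus E}\equiv x^*$, so $r$ is the desired $B_1$-retraction of $X$ onto $E$.

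There is no serious analytic obstacle here; the content is structural, and the delicate point is the bookkeeping that makes convergence work everywhere at once. The $F_\sigma$-presentation of $E$ guarantees that the useful part of the weak retraction is eventually seen by every point of $E$, while the $F_\sigma$-presentation of the complement (equivalent to $E$ being $G_\delta$) guarantees that every point outside $E$ is eventually frozen at $x^*$. Both exhaustions must be increasing and indexed by the same $n$ so that each $g_n$ is pinned on the disjoint pair $(F_n,C_n)$, with contractibility supplying the continuous interpolation off these sets. One should also record that $F_n$ and $C_n$ are genuinely disjoint and closed, which is immediate from $F_n\subseteq E$ and $C_n\subseteq X\setminus E$.
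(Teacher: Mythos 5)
Your proof is correct and follows essentially the same route as the paper: both take increasing closed exhaustions of $E$ and of $X\setminus E$, use Lemma~\ref{l1} (via contractibility of $E$) to glue $r_n$ on the $n$-th closed piece of $E$ with the constant $x^*$ on the $n$-th closed piece of the complement, and conclude pointwise convergence because every point eventually lies in one of the two exhaustions. No gaps; the bookkeeping you highlight is exactly the content of the paper's argument.
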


\begin{proof}
  Let $(r_n)_{n=1}^\infty$ be a sequence of continuous mappings $r_n:X\to E$ such that $\lim\limits_{n\to\infty}r_n(x)=x$ for all $x\in E$.
  Choose increasing sequences $(E_n)_{n=1}^\infty$ and $(F_n)_{n=1}^\infty$ of closed subsets of $X$ such that $E=\bigcup\limits_{n=1}^\infty E_n$ and $X\setminus E=\bigcup\limits_{n=1}^\infty F_n$. Fix  $x^*\in E$. Then for every  $n\in\mathbb N$ by Lemma~\ref{l1} there exists a continuous mapping $f_n:X\to E$ such that $f_n(x)=r_n(x)$ if $x\in E_n$, and $f_n(x)=x^*$ if  $x\in F_n$.  It is easy to verify that the sequence $(f_n)_{n=1}^\infty$ is pointwise convergent on $X$ and $\lim\limits_{n\to\infty} f_n(X)\subseteq E$. Let $r(x)=\lim\limits_{n\to\infty}f_n(x)$ for all $x\in X$. Then $r(x)=\lim\limits_{n\to\infty}r_n(x)=x$ for all $x\in E$.
\end{proof}

\begin{proposition}\label{cor:ConeOverCountableSet}
  The perfect cone $\Delta_p(X)$ over a countable regular space $X$ is an absolute $B_1$-retract.
\end{proposition}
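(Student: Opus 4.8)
The plan is to derive the statement by assembling the structural results already established for $\Delta_p(X)$ and feeding them into Theorem~\ref{th:ContrWeakB1}. Since $X$ is countable and regular, $\Delta_p(X)$ is perfectly normal by Theorem~\ref{th:PropertiesOfCones}(\ref{PerfectlyNormal}), so $\Delta_p(X)$ belongs to the class $\mathcal C$ for which the notion of absolute $B_1$-retract is defined. I would then fix a homeomorphism $h$ carrying $\Delta_p(X)$ onto a $G_\delta$-subset of a perfectly normal space $Y$, identify $\Delta_p(X)$ with $h(\Delta_p(X))$, and aim to show that this $G_\delta$-set is a $B_1$-retract of $Y$.

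To invoke Theorem~\ref{th:ContrWeakB1}, with $Y$ playing the role of the ambient normal space and $E=\Delta_p(X)$, I would check its three hypotheses. First, $Y$ is normal, being perfectly normal. Second, $\Delta_p(X)$ is contractible by Theorem~\ref{th:PropertiesOfCones}(\ref{contract}). Third, by Proposition~\ref{prop:weak} the cone over a countable (regular, hence Hausdorff) space is an absolute weak $B_1$-retract, so in particular $\Delta_p(X)$ is a weak $B_1$-retract of $Y$. The only condition left to secure is that $\Delta_p(X)$ be ambiguous, i.e. simultaneously $F_\sigma$ and $G_\delta$, in $Y$.

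The $G_\delta$ half is handed to us for free by the embedding $h$. The $F_\sigma$ half is the real content, and here I would exploit the finite approximation of the cone. Writing $X=\{x_n:n\in\mathbb N\}$ and $X_n=\{x_1,\dots,x_n\}$, one has $\Delta_p(X)=\bigcup_{n=1}^\infty\Delta_p(X_n)$. Each $X_n$ is a finite Hausdorff, hence discrete, compact space, so $\Delta_p(X_n)=\Delta(X_n)$ is a quotient of the compact space $X_n\times[0,1]$ and is therefore compact (Proposition~\ref{prop:CompactPerfectCone} also guarantees that the perfect and quotient topologies coincide here). A compact subset of the Hausdorff space $Y$ is closed, so every $\Delta_p(X_n)$ is closed in $Y$ and $\Delta_p(X)$ is $F_\sigma$ in $Y$. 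Thus $\Delta_p(X)$ is ambiguous in $Y$, and Theorem~\ref{th:ContrWeakB1} yields that $\Delta_p(X)$ is a $B_1$-retract of $Y$. As $h$ was arbitrary, $\Delta_p(X)$ is an absolute $B_1$-retract.

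The step I expect to carry the genuine weight, as opposed to mere citation, is the verification that $\Delta_p(X)$ is $F_\sigma$ in $Y$. This rests on two points I would make explicit: that the finite subcones $\Delta_p(X_n)$ are compact, so that they are closed, and that $Y$ is Hausdorff, which is precisely what converts compactness into closedness. Both hold under the standing separation conventions, the first because $X_n$ is finite and the cone construction preserves compactness, the second because a perfectly normal space is Hausdorff. Everything else in the argument is a direct appeal to the results already proved.
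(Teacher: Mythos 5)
Your proposal is correct and follows essentially the same route as the paper: perfect normality via Theorem~\ref{th:PropertiesOfCones}(\ref{PerfectlyNormal}), the weak $B_1$-retract property via Proposition~\ref{prop:weak}, contractibility, and then Theorem~\ref{th:ContrWeakB1}. The only difference is that you explicitly justify the $F_\sigma$ half of ambiguity (compactness of the finite subcones $\Delta_p(X_n)$ plus Hausdorffness of the ambient space), a step the paper asserts without proof, which is a worthwhile addition rather than a deviation.
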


 \begin{proof}
  We first note that $\Delta_p(X)$ is perfectly normal by Theorem~\ref{th:PropertiesOfCones}~(\ref{PerfectlyNormal}). Assume that $\Delta_p(X)$ is a $G_\delta$-subset of a perfectly normal space  $Z$. Then   $\Delta_p(X)$ is a weak $B_1$-retract of $Z$ by Proposition~\ref{prop:weak}. Moreover, $\Delta_p(X)$ is a contractible $F_\sigma$-subspace of  $Z$. Hence, Theorem~\ref{th:ContrWeakB1} implies that $\Delta_p(X)$ is a $B_1$-retract of~$Z$.
\end{proof}

Let us observe that any $B_1$-retract of a space with a regular $G_\delta$-diagonal is a $G_\delta$-subset of this space~\cite[Proposition 2.2]{KRaex}. But it is not valid for weak $B_1$-retracts as the following example shows.

\begin{example}\label{ex:weakB1}
Let $\mathbb Q$ be the set of all rational numbers and $X=\mathbb Q\cap [0,1]$. Then  $\Delta_p(X)$ is a weak $B_1$-retract of $\mathbb R^2$, but is not a $B_1$-retract of~$\mathbb R^2$.
\end{example}

\begin{proof}
  Indeed, $\Delta_p(X)$ is a weak $B_1$-retract of $\mathbb R^2$ by Proposition~\ref{prop:weak}. Since $\Delta_p(X)$ is not a $G_\delta$-set in $\mathbb R^2$, $\Delta_p(X)$ is not a $B_1$-retract.
\end{proof}

\begin{theorem}\label{th:UnionB1Retracts} Let $X$ be a perfectly normal space, $E$ be a contractible $G_\delta$-subspace of $X$, $x^*\in E$ and let $(E_n:n\in\mathbb N)$ be  a cover of $E$ such that
\begin{enumerate}
\item $E_n\cap E_m=\{x^*\}$ for all $n\ne m$;

  \item $E_n$ is a relatively ambiguous set in $E$ for every $n$;

  \item $E_n$ is a (weak) $B_1$-retract of $X$ for every $n$.
\end{enumerate}
 Then  $E$ is a (weak) $B_1$-retract of  $X$.
\end{theorem}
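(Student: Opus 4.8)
The plan is to mimic the proof of Theorem~\ref{th:ContrWeakB1}: I would build continuous maps $f_k\colon X\to E$ whose pointwise limit is the desired (weak) $B_1$-retraction, assembling each $f_k$ from the retractions of the first $k$ pieces together with the constant map to $x^*$, and gluing these finitely many ingredients with Lemma~\ref{l1} (legitimate since $E$ is contractible and $X$ is normal). First I would fix, for each $n$, a sequence of continuous maps $r^{(n)}_k\colon X\to E_n$ witnessing that $E_n$ is a (weak) $B_1$-retract of $X$, and a contraction of $E$; composing the contraction with a path from its centre to $x^*$ (contractible spaces are path-connected) I may assume it pulls $E$ to $x^*$, so that $x^*$ serves as the base point in Lemma~\ref{l1}.

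Next I would prepare the filtrations feeding the gluing. Since $X$ is perfectly normal and $E$ is $G_\delta$, write $X\setminus E=\bigcup_j P_j$ with $P_j$ closed in $X$ and increasing. Since each $E_n$ is relatively $F_\sigma$ in $E$, write $E_n\setminus\{x^*\}=\bigcup_m C_{n,m}$ with $C_{n,m}$ closed in $E$, increasing in $m$, and $x^*\notin C_{n,m}$. The intended scheme is: at stage $k$, apply $r^{(n)}_k$ on a closed-in-$X$ set capturing $C_{n,k}$ for each $n\le k$, and apply the constant $x^*$ on a closed set $B^{(k)}\supseteq P_k\cup\{x^*\}$. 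Granting this, the verification splits into three cases. For fixed $x\in E_n\setminus\{x^*\}$ one has $x\in C_{n,k}$ for all large $k$, hence eventually $f_k(x)=r^{(n)}_k(x)\to x$; for $x=x^*$ one arranges $f_k(x^*)=x^*$; and for fixed $x\in X\setminus E$ one has $x\in P_k$ eventually, so $f_k(x)=x^*\to x^*\in E$. Thus the same construction yields the weak statement (convergence on $E$) and, through the $P_j$, the full statement (convergence on all of $X$ to a map into $E$ fixing $E$); the only difference is that in the genuine case I also invoke $r^{(n)}_k\to r^{(n)}$ off $E_n$ to control the finitely many transitional stages before $x$ enters $B^{(k)}$.

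The heart of the argument, and the step I expect to be the main obstacle, is to realise this scheme by \emph{pairwise disjoint closed-in-$X$} sets, as Lemma~\ref{l1} demands. Because $E$ is only $G_\delta$ (not ambiguous) in $X$, a relatively closed piece $C_{n,k}$ is not closed in $X$; one has $\overline{C_{n,k}}^{\,X}\cap E=C_{n,k}$, so the $X$-closures of distinct pieces meet only inside $X\setminus E$, yet they may genuinely meet there, and near $x^*$ (where all pieces accumulate) they are topologically entangled in $X$. This is precisely where perfect normality must be used in an essential way, and I expect to need both halves of relative ambiguity: the finite union $\bigcup_{n'\le k,\,n'\ne n}E_{n'}$ is again ambiguous in $E$ and disjoint from $E_n\setminus\{x^*\}$, which lets me separate the pieces \emph{within} $E$, while the identity $X\setminus E=\bigcup_j P_j$ and the fact that in a perfectly normal space every open set is $F_\sigma$ let me excise the unavoidable overlaps, which lie off $E$, by a closed set. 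Concretely I would set $A^{(k)}_n=\overline{C_{n,k}}^{\,X}\setminus O_k$, where $O_k$ is an open set containing the pairwise overlaps of $\overline{C_{1,k}}^{\,X},\dots,\overline{C_{k,k}}^{\,X}$; each $A^{(k)}_n$ is then closed in $X$, and the sets become pairwise disjoint since every overlap is swallowed by $O_k$. The delicate point, which I regard as the crux, is to choose $O_k$ (using perfect normality and the shrinking filtration $E=\bigcap_i V_i$ with $\overline{V_{i+1}}\subseteq V_i$) so that it also contains $P_k$ yet retreats toward $X\setminus E$ well enough that every fixed $x\in E$ eventually escapes $O_k$; this is where the restriction that disjoint \emph{ambiguous} pieces can touch only on a meagre boundary set is what keeps the retention compatible with the excision. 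Once the disjoint closed family $A^{(k)}_1,\dots,A^{(k)}_k$, together with $B^{(k)}$, is in hand, Lemma~\ref{l1} produces the continuous $f_k\colon X\to E$ agreeing with $r^{(n)}_k$ on $A^{(k)}_n$ and with $x^*$ on $B^{(k)}$, and the three-case convergence above finishes the proof.
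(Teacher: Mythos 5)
Your overall architecture (gluing finitely many maps with Lemma~\ref{l1}, exhausting $E_n\setminus\{x^*\}$ and $X\setminus E$ by increasing closed sets, three-case convergence check) coincides with the paper's, but the step you yourself flag as the crux is a genuine gap, and the mechanism you propose for it provably cannot work. You need open sets $O_k$ containing $P_k$ (and the pairwise overlaps of the closures $\overline{C_{n,k}}^{\,X}$) such that every fixed $x\in E$ lies outside $O_k$ for all large $k$. The hypotheses of the theorem allow $X$ to be a Baire space in which both $E$ and $X\setminus E$ are dense (cones sitting densely in a cube are precisely the kind of example the paper cares about), and in that situation no choice of the $O_k$ succeeds: for every $k_0$ the set $U_{k_0}=\bigcup_{k\ge k_0}O_k$ is open and contains $\bigcup_{k\ge k_0}P_k=X\setminus E$, hence is dense, so $\limsup_k O_k=\bigcap_{k_0}U_{k_0}$ is comeager; since the dense $G_\delta$-set $E$ is also comeager, there are (comeager many) points $x\in E$ lying in infinitely many $O_k$, and for these points you have no control of $f_k(x)$ along infinitely many stages, so $f_k(x)\to x$ is never established. (Dropping the requirement $P_k\subseteq O_k$ does not help: then $B^{(k)}\supseteq P_k$ is no longer disjoint from the $A^{(k)}_n$, since $\overline{C_{n,k}}^{\,X}$ can meet $P_k$.) A secondary error: a filtration $E=\bigcap_i V_i$ with $\overline{V_{i+1}}\subseteq V_i$ forces $E=\bigcap_i\overline{V_i}$ to be closed, so it does not exist for the $G_\delta$-sets at issue.

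The idea you are missing --- and the real content of hypothesis (2), of which you only use the $F_\sigma$ half --- is Kuratowski's extension theorem for ambiguous sets \cite[p.~359]{Ku1}: in a perfectly normal space $X$, any set that is simultaneously relatively $F_\sigma$ and relatively $G_\delta$ in a subspace $E$ is the trace on $E$ of a set that is simultaneously $F_\sigma$ and $G_\delta$ in $X$. The paper applies this to obtain ambiguous sets $C_n\subseteq X$ with $C_n\cap E=E_n\setminus\{x^*\}$, sets $D_n=C_n\cup F_n$, and disjointifies \emph{these} sets, putting $X_1=D_1$ and $X_n=D_n\setminus\bigcup_{k<n}D_k$; the $X_n$ remain ambiguous, form an exact partition of $X\setminus\{x^*\}$, and still satisfy $X_n\cap E=E_n\setminus\{x^*\}$ because the $E_n$ meet pairwise only at $x^*$. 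Only after this exact partition does one pass to increasing closed sets $B_{n,m}\subseteq X_n$: disjointness across different $n$ is then automatic, no excision is needed, and every $x\in E_n\setminus\{x^*\}$ (and, for the non-weak case, every $x\in X\setminus E$) genuinely lies in its $A_{n,m}$ eventually. In short: extend first to ambiguous sets in $X$, then disjointify as sets, then approximate from inside by closed sets; your order --- take closures in $X$, then carve out overlaps by open sets --- runs into the Baire-category obstruction above and cannot be repaired without the extension theorem.
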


\begin{proof} From \cite[p.~359]{Ku1} it follows that for every $n$ there exists an ambiguous set $C_n$ in $X$ such that $C_n\cap E=E_n\setminus\{x^*\}$.
Moreover, there exists a sequence $(F_n)_{n=1}^\infty$ of closed subsets of $X$ such that $X\setminus E=\bigcup\limits_{n=1}^\infty F_n$. Let
$D_n=C_n\cup F_n$, $n\ge 1$. Now define $X_1=D_1$ and  $X_n=D_n\setminus (\bigcup\limits_{k<n}  D_k)$ if $n\ge 2$. Then $(X_n:n\in\mathbb N)$ is a partition of $X\setminus\{x^*\}$  by ambiguous sets $X_n$ and $X_n\cap E=E_n\setminus \{x^*\}$ for every $n\ge 1$.

Suppose that $E_n$ is a weak $B_1$-retract of $X$ for every $n$. Choose a sequence $(r_{n,m})_{m=1}^\infty$ of continuous mappings $r_{n,m}:X\to E_n$ such that $\lim\limits_{m\to\infty} r_{n,m}(x)=x$ for all  \mbox{$x\in E_n$}. Since $X_n$ is $F_\sigma$ in $X$, for every $n$ there is an increasing sequence
$(B_{n,m})_{m=1}^\infty$ of closed subsets $B_{n,m}$ of $X$ such that $X_n=\bigcup\limits_{m=1}^\infty B_{n,m}$. Let
$A_{n,m}=\O$ if $n>m$, and $A_{n,m}=B_{n,m}$ if $n\le m$. Then Lemma~\ref{l1} implies that for every $m\in\mathbb N$ there is a continuous mapping $r_m:X\to E$ such that $r_m|_{A_{n,m}}=r_{n,m}$ and $r_m(x^*)=x^*$.

We will show that $\lim\limits_{m\to\infty}r_m(x)=x$ on $E$. Fix $x\in E$. If $x=x^*$ then $r_m(x)=x$ for all $m$. If $x\ne x^*$ then there is a unique $n$ such that $x\in E_n$. Since $(A_{n,m})_{m=1}^\infty$ increases, there exists a number $m_0$ such that $x\in A_{n,m}$ for all $m\ge m_0$. Hence, \mbox{$\lim\limits_{m\to\infty} r_m(x)=\lim\limits_{m\to\infty}r_{n,m}(x)=x$}.
Therefore,  $E$ is a weak $B_1$-retract of $X$.

If $E_n$ is a $B_1$-retract of $X$ for every $n$, we apply similar arguments.
\end{proof}

\section{Cones over ambiguous sets}

\begin{theorem}\label{th:ConeOverOpenSet} Let $\Delta_p(X)$ be the perfect cone over a metrizable locally arcwise connected space $X$, $Z$ be a normal space and let $h:\Delta_p(X)\to Z$ be an embedding such that $h(\Delta_p(X))$ is an ambiguous set in $Z$. If

a) $X$ is separable, or

b) $\Delta_p(X)$ is collectionwise  normal,

\noindent then $h(\Delta_p(X))$ is a $B_1$-retract of $Z$.
\end{theorem}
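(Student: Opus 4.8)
The plan is to reduce the theorem to Theorem~\ref{th:Raex1} by verifying that $\Delta_p(X)$ has all the structural properties that hypothesis demands, and then to exploit the contractibility of the cone to upgrade the resulting retraction where necessary. First I would observe that $\Delta_p(X)$ is metrizable: by Theorem~\ref{th:PropertiesOfCones}~(\ref{metrizability}), metrizability of $X$ passes to $\Delta_p(X)$ via the explicit metric $d(\lambda(x,s),\lambda(y,t))=|t-s|+\min\{s,t\}\varrho(x,y)$. Next, by Theorem~\ref{th:PropertiesOfCones}~(\ref{LocConn}b), local arcwise connectedness of $X$ yields local arcwise connectedness of $\Delta_p(X)$, and by (\ref{contract}) the cone is contractible, hence arcwise connected (every point is joined to the vertex $v$ by a segment). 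Thus $h(\Delta_p(X))$ is an arcwise connected and locally arcwise connected metrizable subspace of $Z$ which, by hypothesis, is ambiguous (simultaneously $F_\sigma$ and $G_\delta$) in the normal space $Z$.

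At this point the two cases branch exactly as in Theorem~\ref{th:Raex1}. In case~(a), if $X$ is separable then $\Delta_p(X)$ is separable (the countable dense set of $X$ crossed with a countable dense subset of $(0,1]$, together with $v$, is dense), so hypothesis~(i) of Theorem~\ref{th:Raex1} is met with $E=h(\Delta_p(X))$ and $X$ there replaced by $Z$; the conclusion that $h(\Delta_p(X))$ is a $B_1$-retract of $Z$ follows immediately. In case~(b), I would transfer the collectionwise normality of $\Delta_p(X)$ across the embedding $h$ to deduce that $h(\Delta_p(X))$ is collectionwise normal as a subspace; since $Z$ is normal and $h(\Delta_p(X))$ is ambiguous, hypothesis~(ii) of Theorem~\ref{th:Raex1} applies. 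In both cases Theorem~\ref{th:Raex1} delivers the desired $B_1$-retraction.

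The main obstacle is the mismatch between the ambient hypotheses of Theorem~\ref{th:Raex1} and those available here. Theorem~\ref{th:Raex1} in its case~(ii) requires the \emph{ambient} space $X$ to be collectionwise normal, whereas here we are only told the \emph{subspace} $\Delta_p(X)$ is collectionwise normal and that $Z$ is normal. Resolving this is the crux: I would either invoke that an ambiguous (in particular $F_\sigma$) subspace of a normal space inherits the relevant extension properties from its own collectionwise normality, or—more robustly—rerun the construction underlying Theorem~\ref{th:Raex1} with $E=h(\Delta_p(X))$ playing the role of the space whose collectionwise normality drives the simultaneous-extension (partition-of-unity style) argument, using that $E$ is $F_\sigma$ and $G_\delta$ to produce the approximating continuous maps $r_n:Z\to E$ and Theorem~\ref{th:ContrWeakB1} together with contractibility of $E$ to assemble them into a genuine $B_1$-retraction. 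The separable case~(a) carries no such difficulty, since separability is a topological invariant transported freely through $h$.
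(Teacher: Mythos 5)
Your reduction is the same as the paper's: its entire proof consists of noting that $h(\Delta_p(X))$ is metrizable, arcwise connected and locally arcwise connected by Theorem~\ref{th:PropertiesOfCones} and then citing Theorem~\ref{th:Raex1}. Your treatment of case a) --- separability is a topological invariant, so hypothesis (i) of Theorem~\ref{th:Raex1} applies to $E=h(\Delta_p(X))$ inside the normal space $Z$ --- is exactly what the paper does, and it is correct.

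The mismatch you single out as ``the crux'' in case b) is genuine, but it is a defect of the theorem's \emph{statement}, not an obstacle your argument could remove. Read literally, hypothesis b) is vacuous: $\Delta_p(X)$ is metrizable by Theorem~\ref{th:PropertiesOfCones}~(\ref{metrizability}), every metrizable space is paracompact (Stone's theorem), and every paracompact Hausdorff space is collectionwise normal; so b) holds for every $X$ covered by the theorem and the a)/b) dichotomy collapses. The only sensible reading is that b) is a misprint for ``$Z$ is collectionwise normal'', which matches hypothesis (ii) of Theorem~\ref{th:Raex1}; under that reading the paper's (and your) one-line application of Theorem~\ref{th:Raex1} is complete, and indeed the paper applies it without further comment. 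Your proposed repairs of the literal statement, by contrast, cannot succeed: since collectionwise normality of $E$ is automatic for metrizable $E$, any construction ``driven by $E$'s collectionwise normality'' would prove the conclusion with no hypothesis beyond normality of $Z$, i.e.\ Theorem~\ref{th:Raex1} with both (i) and (ii) deleted --- precisely what that theorem's dichotomy exists to avoid. The place where collectionwise normality enters such arguments is the extension of $E$-valued continuous maps from closed subsets of the \emph{ambient} space; for non-separable metrizable targets this is governed (via Dowker's theorem) by collectionwise normality of the domain $Z$, and no property of the subspace $E$ can substitute for it. So case a) stands as you wrote it, and case b) should be proved --- as the paper implicitly does --- after correcting the hypothesis to ``$Z$ is collectionwise normal''.
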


\begin{proof}
We notice that $h(\Delta_p(X))$ is metrizable, arcwise connected and locally arcwise connected according to Theorem~\ref{th:PropertiesOfCones}.
Then the set $h(\Delta_p(X))$ is a $B_1$-retract of $Z$ by Theorem~\ref{th:Raex1}.
\end{proof}

By $B_\varepsilon(x_0)$ we denote an open ball in a metric space $X$ with center at $x_0\in X$ and with radius $\varepsilon$.

\begin{theorem}\label{th:ConeOverZeroDim}
Let $\Delta_p(X)$ be the perfect cone over a zero-dimensional metrizable separable space $X$, $Z$ be a normal space and let $h:\Delta_p(X)\to Z$ be such a homeomorphic embedding that $h(\Delta_p(X))$ is a closed set in $Z$. Then $h(\Delta_p(X))$ is a weak $B_1$-retract of $Z$.
\end{theorem}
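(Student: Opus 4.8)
The plan is to approximate the identity on $\Delta_p(X)$ by retractions onto \emph{finite} subcones and then to transport these maps to $Z$ using the absolute retract property of finite cones. The decisive preliminary step is the choice of metric on $X$. Since $X$ is a zero-dimensional separable metrizable space, it embeds into the Cantor cube $\{0,1\}^\omega$; fixing such an embedding and pulling back the standard Cantor metric yields a compatible metric $\varrho\le 1$ for which, for every $n$, the traces of the length-$n$ cylinders form a \emph{finite} clopen partition $\mathcal P_n=\{U^n_1,\dots,U^n_{k_n}\}$ of $X$ with $\operatorname{diam}U^n_i\le 2^{-n}$. This finiteness is precisely what fails for an arbitrary compatible metric when $X$ is non-compact, so securing it is the main obstacle, and it is here that zero-dimensionality together with separability is used in full. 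I equip $\Delta_p(X)$ with the metric $d(\lambda(x,s),\lambda(y,t))=|t-s|+\min\{s,t\}\varrho(x,y)$ from the proof of Theorem~\ref{th:PropertiesOfCones}(\ref{metrizability}).

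For each $n$ I choose a representative $x^n_i\in U^n_i$, put $X_n=\{x^n_1,\dots,x^n_{k_n}\}$, and define $\rho_n\colon\Delta_p(X)\to\Delta_p(X_n)$ by $\rho_n(\lambda(x,t))=\lambda(x^n_i,t)$ whenever $x\in U^n_i$, and $\rho_n(v)=v$. Because the sets $U^n_i$ are clopen, each $\rho_n$ is continuous on $\Delta_p(X)\setminus\{v\}$; since $\beta(\rho_n(y))=\beta(y)$, the preimage of each basic vertex neighbourhood $\beta^{-1}([0,\varepsilon))$ equals itself, so $\rho_n$ is continuous at $v$ as well. The key estimate is $d(\rho_n(\lambda(x,t)),\lambda(x,t))=t\,\varrho(x,x^n_i)\le 2^{-n}$, which shows that $\rho_n\to\operatorname{id}$ uniformly on $\Delta_p(X)$. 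Here $\Delta_p(X_n)=\lambda(X_n\times[0,1])$ is the cone over the finite set $X_n$, sitting inside $\Delta_p(X)$ as a compact subspace whose subspace topology is the perfect-cone topology.

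Finally I transport $\rho_n$ to $Z$. By Corollary~\ref{th:ConeOverFiniteSet} each $\Delta_p(X_n)$ is a compact absolute retract, hence a retract of the Hilbert cube and therefore an absolute extensor for normal spaces. Identifying $\Delta_p(X)$ with the closed set $h(\Delta_p(X))\subseteq Z$, the map $\rho_n\colon\Delta_p(X)\to\Delta_p(X_n)$, being continuous on a closed subset of the normal space $Z$, extends to a continuous $r_n\colon Z\to\Delta_p(X_n)\subseteq\Delta_p(X)$. Since $r_n|_{\Delta_p(X)}=\rho_n$ and $\rho_n\to\operatorname{id}$ pointwise on $\Delta_p(X)$, we obtain $\lim_{n\to\infty}r_n(y)=y$ for every $y\in\Delta_p(X)$. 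Thus the sequence $(r_n)$ witnesses that $h(\Delta_p(X))$ is a weak $B_1$-retract of $Z$. I expect the finite-partition step to be the crux; the transport step is then a routine application of the absolute extensor property, and the closedness of $h(\Delta_p(X))$ in $Z$ is exactly what makes that extension available.
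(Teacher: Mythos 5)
Your proof is correct and follows essentially the same route as the paper: approximate the identity on $X$ by locally constant maps determined by finite clopen partitions of small mesh, lift these to continuous maps of $\Delta_p(X)$ onto finite subcones $\Delta_p(X_n)$, and use that each finite subcone is a compact absolute retract (hence an absolute extensor) together with the closedness of $h(\Delta_p(X))$ to extend to $Z$. The only (cosmetic) difference is how the finite clopen partitions are produced: you embed $X$ into the Cantor cube and use cylinder sets, while the paper takes a totally bounded compatible metric and invokes strong zero-dimensionality to refine finite ball covers by disjoint clopen sets.
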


\begin{proof} Without loss of generality we may assume that $\Delta_p(X)$ is a closed subspace of a normal space $Z$. Consider a metric $d$ on $X$ which generates its topological structure and $(X,d)$ is a completely bounded space. For every $n\in\mathbb N$ there exists a finite set $A_n\subseteq X$ such that the family $\mathcal B_n=(B_{\frac 1n}(a):a\in A_n)$ is a cover of $X$. Since $X$ is strongly zero-dimensional~\cite[Theorem 6.2.7]{Eng}, for every $n$ there exists a  finite cover $\mathcal U_n=(U_{i,n}:i\in I_n)$ of $X$ by disjoint clopen sets $U_{i,n}$ which refines~$\mathcal B_n$. Take an arbitrary $x_{i,n}\in U_{i,n}$ for every $n\in\mathbb N$ and $i\in I_n$. For all $x\in X$ and $n\in\mathbb N$ define
 $$
 f_n(x)=x_{i,n},
 $$
if $x\in U_{i,n}$ for some $i\in I_n$. Then every mapping $f_n:X\to X$ is continuous and $\lim\limits_{n\to\infty}f_n(x)=x$ for all $x\in X$.

Fix $n\in\mathbb N$. For all $y\in \Delta_p(X)$ we set
 $$
 g_n(y)=\left\{\begin{array}{ll}
                 \lambda(f_n(\alpha(y)),\beta(y)), & \mbox{if}\,\,\,y\ne v, \\
                 v, & \mbox{if}\,\,\, y=v.
               \end{array}
 \right.
 $$
We prove that $g_n:\Delta_p(X)\to \Delta_p(X)$ is continuous at $y=v$. Indeed, let $(y_m)_{m=1}^\infty$ be a sequence of points $y_m\in Y$ such that $y_m\to v$. Assume that $y_m\ne v$ for all $m$. Show that $g_n(y_m)\to v$. Fix $\varepsilon>0$. Since $\beta(y_m)\to 0$, there is a number $m_0$ such that $\beta(y_m)<\varepsilon$ for all $m\ge m_0$. Then $g_n(y_m)=\lambda(f_n(\alpha(y_m)),\beta(y_m))\in \lambda(X\times [0,\varepsilon))$ for all $m\ge m_0$. Hence,  $g_n$ is continuous at  $v$.

Note that $g_n(\Delta_p(X))\subseteq K_n$, where $K_n=\bigcup\limits_{i\in I_n}vx_{i,n}$. Since $K_n$ is a compact absolute retract by Corollary~\ref{th:ConeOverFiniteSet}, $K_n$ is an absolute extensor. Taking into account that $\Delta_p(X)$ is closed in $Z$, we have that there exists a continuous extension $r_n:Z\to K_n$ of $g_n$.

It remains to show that $\lim\limits_{n\to\infty}r_n(y)=y$ for all $y\in \Delta_p(X)$. Fix $y\in \Delta_p(X)$. If $y=v$ then $r_n(y)=g_n(y)=v$ for all $n\ge 1$. Let $y\ne v$. Since $\lim\limits_{n\to\infty}f_n(\alpha(y))=\alpha(y)$ and $\lambda$ is continuous, $$\lim\limits_{n\to\infty}r_n(y)= \lim\limits_{n\to\infty}\lambda(f_n(\alpha(y)),\beta(y))=\lambda(\alpha(y),\beta(y))=y.$$
Hence, $\Delta_p(X)$ is a weak $B_1$-retract of $Z$.
\end{proof}

\begin{theorem}\label{cor:FsigmaZeroDim}
  The perfect cone $\Delta_p(X)$  over a $\sigma$-compact zero-dimensional metrizable space $X$ is an absolute $B_1$-retract.
\end{theorem}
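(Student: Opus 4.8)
The plan is to verify the two defining requirements of an absolute $B_1$-retract separately: that $\Delta_p(X)$ lies in the class of perfectly normal spaces, and that it is a $B_1$-retract whenever it sits as a $G_\delta$-subset of a perfectly normal space. The first is immediate, since $X$ is separable (being $\sigma$-compact and metrizable), so $\Delta_p(X)$ is metrizable by Theorem~\ref{th:PropertiesOfCones}(\ref{metrizability}) and hence perfectly normal. I would then fix a perfectly normal space $Z$ and identify $\Delta_p(X)$ with a $G_\delta$-subset of $Z$, the task being to produce a $B_1$-retraction of $Z$ onto $\Delta_p(X)$.

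The heart of the argument is a decomposition of $X$. First I would establish that a $\sigma$-compact zero-dimensional metrizable space decomposes as a countable union $X=\bigcup_{n}Y_n$ of pairwise disjoint compact sets. Writing $X=\bigcup_n K_n$ with $K_n$ compact and increasing and $K_0=\emptyset$, each difference $K_n\setminus K_{n-1}$ is open in $K_n$, hence a locally compact, $\sigma$-compact, zero-dimensional metrizable space; such a space has a base of compact-open sets, and a countable cover by such sets can be disjointified into a countable partition into compact (clopen) pieces. Collecting these pieces over all $n$ yields the partition. I expect this partitioning step to be the main obstacle, precisely because $X$ need not be locally compact (for instance $X=\mathbb Q\cap[0,1]$), so one cannot split $X$ itself into compact-open pieces; the telescoping through the locally compact sets $K_n\setminus K_{n-1}$ is what circumvents this.

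Next I would pass to the cones. For each $n$ put $E_n=\lambda(Y_n\times[0,1])$ with the topology inherited from $\Delta_p(X)\subseteq Z$; a routine comparison of neighbourhood bases at $v$ shows $E_n$ is homeomorphic to the cone $\Delta(Y_n)$, which is perfect by Proposition~\ref{prop:CompactPerfectCone} since $Y_n$ is compact, so $E_n\simeq\Delta_p(Y_n)$. The family $(E_n)$ then covers $\Delta_p(X)$ and satisfies $E_n\cap E_m=\{v\}$ for $n\ne m$, because $vx\cap vy=\{v\}$ for distinct $x,y$. Being compact, each $E_n$ is closed in $Z$; thus Theorem~\ref{th:ConeOverZeroDim} shows $E_n$ is a weak $B_1$-retract of $Z$, Theorem~\ref{th:PropertiesOfCones}(\ref{contract}) gives that $E_n$ is contractible, and its closedness makes it ambiguous in the perfectly normal space $Z$. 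Theorem~\ref{th:ContrWeakB1} then upgrades each $E_n$ to a genuine $B_1$-retract of $Z$.

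Finally I would invoke Theorem~\ref{th:UnionB1Retracts} with $x^*=v$. Its hypotheses are met: $Z$ is perfectly normal, $\Delta_p(X)$ is a contractible (Theorem~\ref{th:PropertiesOfCones}(\ref{contract})) $G_\delta$-subspace of $Z$, the sets $E_n$ cover it and meet pairwise only at $v$, each $E_n$ is compact and hence relatively ambiguous in $\Delta_p(X)$, and each $E_n$ is a $B_1$-retract of $Z$ by the previous step. The theorem therefore yields that $\Delta_p(X)$ is a $B_1$-retract of $Z$, which completes the verification that $\Delta_p(X)$ is an absolute $B_1$-retract.
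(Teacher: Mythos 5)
Your proposal is correct and takes essentially the same route as the paper: the same telescoping decomposition of $X$ into countably many pairwise disjoint compact pieces (via clopen partitions of the differences of an increasing compact exhaustion), the same identification of the subcones over these pieces as closed weak $B_1$-retracts of $Z$ via Theorem~\ref{th:ConeOverZeroDim}, and the same use of Theorems~\ref{th:UnionB1Retracts} and~\ref{th:ContrWeakB1}. The only (immaterial) difference is the order of the last two steps: the paper glues the weak $B_1$-retracts $E_k$ together by Theorem~\ref{th:UnionB1Retracts} and then upgrades the union to a $B_1$-retract by Theorem~\ref{th:ContrWeakB1}, whereas you upgrade each compact piece to a $B_1$-retract first (using that closed sets in the perfectly normal space $Z$ are ambiguous) and then glue.
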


\begin{proof} Assume that $\Delta_p(X)$ is a $G_\delta$-subspace of a perfectly normal space $Z$.

Since $X$ is $\sigma$-compact, there exists an increasing sequence $(F_n)_{n=1}^\infty$ of compact subsets of $Z$ such that $X=\bigcup\limits_{n=1}^\infty F_n$. Since for every $n\in\mathbb N$ the set $F_{n+1}\setminus F_n$ is open in the zero-dimensional metrizable separable space $F_{n+1}$, there exists a partition $(B_{n,m}:m\in\mathbb N)$ of $F_{n+1}\setminus F_n$ by relatively clopen sets  $B_{n,m}$ in $F_{n+1}$. Let $\mathbb N^2=(n_k,m_k:k\in\mathbb N)$, $H_0=F_1$ and let $H_k=B_{n_k,m_k}$ for every $k\in\mathbb N$.  Then the family  $(H_k:k=0,1,\dots)$ is a partition of $X$ by compact sets $H_k$.

Fix $k\in\mathbb N$. Let $E_k=\Delta_p(H_k)$ be the perfect cone over zero-dimensional metrizable separable space $H_k$. Then $E_k$ is a closed subset of $Z$. Therefore, $E_k$ is a weak $B_1$-retract of $Z$ by Theorem~\ref{th:ConeOverZeroDim}.

Since $\Delta_p(X)=\bigcup\limits_{k=1}^\infty E_k$, Theorem~\ref{th:UnionB1Retracts} implies that $\Delta_p(X)$ is a weak $B_1$-retract of $Z$. It remains to apply Theorem~\ref{th:ContrWeakB1}.
\end{proof}

\begin{theorem}\label{th:coneoverR}
The perfect cone $\Delta_p(X)$  over  a $\sigma$-compact space $X\subseteq \mathbb R$ is an absolute $B_1$-retract.
\end{theorem}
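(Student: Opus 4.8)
The plan is to reduce the statement to the two cone theorems already proved — Theorem~\ref{cor:FsigmaZeroDim} for a zero-dimensional core and Theorem~\ref{th:ConeOverOpenSet} for the one-dimensional (interval) part — and to glue the resulting retracts with the union lemma, Theorem~\ref{th:UnionB1Retracts}. First I would record the standing setup. Since $X\subseteq\mathbb R$ is separable metrizable, $\Delta_p(X)$ is metrizable by Theorem~\ref{th:PropertiesOfCones}~(\ref{metrizability}), hence perfectly normal and so a member of the class $\mathcal C$; moreover $\Delta_p(X)$ is $\sigma$-compact (it is the union of the compact cones $\lambda(K_n\times[0,1])$ over a compact exhaustion $X=\bigcup_n K_n$), so it is $F_\sigma$ in any perfectly normal space containing it. Thus I may assume $\Delta_p(X)$ is a $G_\delta$-subspace of a perfectly normal $Z$, whence it is ambiguous in $Z$, and the goal becomes producing a $B_1$-retraction $Z\to\Delta_p(X)$.

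The geometric heart is the decomposition of $X$. Let $U=\operatorname{int}_{\mathbb R}X$ and $D=X\setminus U$. Then $U$ is open in $\mathbb R$, hence a countable disjoint union of open intervals $I_1,I_2,\dots$, each homeomorphic to $(0,1)$ and therefore metrizable, separable, arcwise connected and locally arcwise connected. The set $D$ is closed in $X$, $\sigma$-compact (a closed subset of the $\sigma$-compact $X$), and has empty interior in $\mathbb R$; since a subset of $\mathbb R$ with dense complement has arbitrarily small relatively clopen neighborhoods (cut at two points of the complement straddling a given point), $D$ is zero-dimensional. So $X=D\sqcup I_1\sqcup I_2\sqcup\cdots$ with $D$ a $\sigma$-compact zero-dimensional metrizable space.

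Next I set $E_0=\Delta_p(D)$ and $E_j=\Delta_p(I_j)$ for $j\ge 1$, viewed as subspaces of $\Delta_p(X)$; because the perfect cone over a subspace agrees with the subspace of the perfect cone (the basic vertex neighbourhoods $\lambda(X\times[0,\varepsilon))$ meet these subsets exactly in $\lambda(D\times[0,\varepsilon))$, resp.\ $\lambda(I_j\times[0,\varepsilon))$), each $E_n$ is again a perfect cone. The family $(E_n)$ covers $\Delta_p(X)$ and $E_n\cap E_m=\{v\}$ for $n\ne m$. I would check relative ambiguity in $\Delta_p(X)$: the set $E_0=\{v\}\cup\alpha^{-1}(D)$ is closed, hence ambiguous; and each $E_j$ is $\sigma$-compact (so $F_\sigma$) while also $E_j=\bigcap_{n}\bigl(\alpha^{-1}(I_j)\cup\beta^{-1}([0,1/n))\bigr)$, an intersection of open sets (using $\bigcap_n\beta^{-1}([0,1/n))=\beta^{-1}(0)=\{v\}$), so $E_j$ is $G_\delta$ and therefore ambiguous. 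Since $\Delta_p(X)$ is $G_\delta$ in $Z$, each $E_n$ is in turn $G_\delta$ in $Z$, and being $\sigma$-compact it is $F_\sigma$ in $Z$ as well, i.e.\ ambiguous in $Z$. Now $E_0$ is a $B_1$-retract of $Z$ by Theorem~\ref{cor:FsigmaZeroDim} ($\Delta_p(D)$ is an absolute $B_1$-retract, embedded as a $G_\delta$-subset of the perfectly normal $Z$), and each $E_j$ is a $B_1$-retract of $Z$ by Theorem~\ref{th:ConeOverOpenSet}(a) applied to the cone over $I_j\simeq(0,1)$. Finally $\Delta_p(X)$ is contractible by Theorem~\ref{th:PropertiesOfCones}~(\ref{contract}), so Theorem~\ref{th:UnionB1Retracts} (the non-weak version) yields that $\Delta_p(X)$ is a $B_1$-retract of $Z$; as $Z$ was an arbitrary perfectly normal overspace, $\Delta_p(X)$ is an absolute $B_1$-retract. (Should one prefer, each piece may instead be treated as a weak $B_1$-retract and the proof closed through Theorem~\ref{th:ContrWeakB1}, exactly mirroring Theorem~\ref{cor:FsigmaZeroDim}.)

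The main obstacle I anticipate is not the gluing, which is routine once the pieces are in place, but the two structural verifications: that $D=X\setminus\operatorname{int}_{\mathbb R}X$ is genuinely zero-dimensional, and that the interval cones $E_j$ are ambiguous \emph{at the vertex}. The latter is the delicate point, because $I_j$ is open but not closed in $X$, so $E_j$ is not open in $\Delta_p(X)$ at $v$; it is precisely the description of $E_j$ via $\alpha$ and $\beta$ that exhibits it as a $G_\delta$, and this identity is what lets Theorem~\ref{th:ConeOverOpenSet} be invoked with an ambiguous image. Everything else reduces to a careful but standard application of the already-established cone theorems and the union lemma.
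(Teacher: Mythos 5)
Your proof is correct and takes essentially the same route as the paper's: decompose $X$ into $G=\operatorname{int}_{\mathbb R}X$ and the zero-dimensional $\sigma$-compact remainder $F=X\setminus G$, obtain $B_1$-retractions for the cones over these pieces via Theorem~\ref{th:ConeOverOpenSet} and Theorem~\ref{cor:FsigmaZeroDim} respectively, and glue them with Theorem~\ref{th:UnionB1Retracts}. The only (harmless and unnecessary) deviation is that you further split the interior into its component intervals and invoke Theorem~\ref{th:ConeOverOpenSet} once per interval, whereas the paper applies it a single time to $\Delta_p(G)$, since $G$ itself is already metrizable, separable and locally arcwise connected; your extra verifications (zero-dimensionality of $F$ and ambiguity of the pieces at the vertex) fill in details the paper leaves implicit.
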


\begin{proof} Suppose that $\Delta_p(X)$ is a $G_\delta$-subspace of a perfectly normal space $Z$. Since $\Delta_p(X)$ is $\sigma$-compact, $\Delta_p(X)$ is $F_\sigma$ in  $Z$.

Let $G={\rm int}_{\mathbb R}X$, $F=X\setminus G$, $A=\Delta_p(G)$ and $B=\Delta_p(F)$.
Since $G$ and $F$ are $\sigma$-compact sets, $A$ and $B$ are $\sigma$-compact sets too. Hence, $A$ and $B$ are ambiguous subsets of $\Delta_p(X)$. Consequently, $A$ and $B$ are ambiguous in $Z$. Since $G$ is metrizable locally arcwise connected separable space, $A$ is a $B_1$-retract of $Z$  by Theorem~\ref{th:ConeOverOpenSet}. Since $F$ is zero-dimensional metrizable $\sigma$-compact space, $B$ is a $B_1$-retract of $Z$ according to Theorem~\ref{cor:FsigmaZeroDim}. Theorem~\ref{th:UnionB1Retracts} implies that the set $\Delta_p(X)=A\cup B$ is a $B_1$-retract of~$Z$.
\end{proof}

Note that the condition of $\sigma$-compactness of $X$ in Theorems~\ref{th:ConeOverZeroDim} and \ref{cor:FsigmaZeroDim} is essential (see Example~\ref{ex:ExMain}).

\section{The weak local connectedness point set of $B_1$-retracts}

Let  $(Y,d)$ be a metric space. A sequence $(f_n)_{n=1}^\infty$ of mappings $f_n:X\to Y$ is {\it uniformly convergent to a mapping  $f$ at a point $x_0$ of $X$} if for any $\varepsilon>0$ there exists a neighborhood $U$ of $x_0$ and $N\in\mathbb N$ such that
$$
d(f_n(x),f(x))<\varepsilon
$$
for all $x\in U$ and $n\ge N$. We observe that if every $f_n$ is continuous at $x_0$ and the sequence $(f_n)_{n=1}^\infty$ converges uniformly to $f$ at $x_0$,
then $f$ is continuous at $x_0$.

By $R((f_n)_{n=1}^\infty,f,X)$ we denote the set of all points of uniform convergence of the sequence $(f_n)_{n=1}^\infty$ to the mapping $f$.

The closure of a set $A$ in a subspace $E$ of a topological space $X$ we denote by $\overline{A}^{\,\,E}$.

A space $X$ is  {\it weakly locally connected at $x_0\in X$} if every open neighborhood of $x_0$ contains a connected (not necessarily open) neighborhood of $x_0$. The set of all points of weak local connectedness of $X$ we will denote by $WLC(X)$.

\begin{theorem}\label{th:LocCon}
  Let $X$ be a locally connected space, $(E,d)$ be a metric subspace of $X$ and let $r:X\to E$ be a $B_1$-retraction which is a pointwise limit of a sequence of continuous mappings $r_n:X\to E$. Then
  $$
  R((r_n)_{n=1}^\infty,r,X)\cap E\subseteq WLC(E).
  $$
\end{theorem}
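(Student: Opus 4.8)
The plan is to verify the defining property of $WLC(E)$ directly at a fixed point $x_0\in R((r_n)_{n=1}^\infty,r,X)\cap E$: given an arbitrary open neighbourhood $W$ of $x_0$ in $E$, I want to produce a connected (not necessarily open) neighbourhood of $x_0$ contained in $W$. Since $E$ is metric, first fix $\varepsilon>0$ with $B_{3\varepsilon}(x_0)\subseteq W$. The first ingredient is that $x_0$ is a point of continuity of $r$: as observed in the text just before the definition of $R$, uniform convergence of the continuous maps $r_n$ at $x_0$ forces $r$ to be continuous at $x_0$, so after shrinking I get a neighbourhood of $x_0$ on which $r$ takes values in $B_\varepsilon(x_0)$. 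Uniform convergence at $x_0$ also supplies a neighbourhood $U$ of $x_0$ and an index $N$ with $d(r_n(x),r(x))<\varepsilon$ for all $x\in U$ and $n\ge N$. Finally, local connectedness of $X$ lets me choose a connected open $C$ with $x_0\in C\subseteq U$, small enough that $r(C)\subseteq B_\varepsilon(x_0)$; put $V=C\cap E$, an open neighbourhood of $x_0$ in $E$.

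The candidate neighbourhood is assembled from the images $r_n(C)$. Each $r_n(C)$ is connected as a continuous image of the connected set $C$, and for $n\ge N$, $x\in C$ one has $d(r_n(x),x_0)\le d(r_n(x),r(x))+d(r(x),x_0)<2\varepsilon$, so $r_n(C)\subseteq B_{2\varepsilon}(x_0)\subseteq W$; moreover $V\subseteq B_\varepsilon(x_0)\subseteq W$ because $r(y)=y$ for $y\in V$. For each $y\in V$ set $L_y=\{x_0,y\}\cup\bigcup_{n\ge N}r_n(C)\subseteq W$. The governing facts are that $r_n(x_0)\to r(x_0)=x_0$ and $r_n(y)\to r(y)=y$, with $r_n(x_0),r_n(y)\in r_n(C)$, so the connected pieces $r_n(C)$ accumulate simultaneously at $x_0$ and at $y$. \emph{If} each $L_y$ is connected, then $K=\bigcup_{y\in V}L_y$ is connected (all $L_y$ share $x_0$), contains the open set $V$, and lies in $W$; hence $K$ is a connected neighbourhood of $x_0$ inside $W$, giving $x_0\in WLC(E)$.

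The step I expect to be the main obstacle is the connectedness of $L_y$. The natural attack is by contradiction: if $L_y=P\sqcup Q$ with $P,Q$ nonempty and relatively clopen and $x_0\in P$, then each connected $r_n(C)$ lies wholly in $P$ or in $Q$; since $r_n(x_0)\to x_0\in P$ with $P$ open, $r_n(C)\subseteq P$ for all large $n$, and if $y\in Q$ the same argument for $r_n(y)\to y\in Q$ gives $r_n(C)\subseteq Q$ for all large $n$, which is impossible. This already shows $y\in P$ and that no clopen partition separates $x_0$ from $y$. What remains — and this is the delicate point — is to exclude a clopen $Q$ meeting only finitely many of the $r_n(C)$ and containing neither $x_0$ nor $y$; equivalently, to upgrade this quasi-component statement to a genuine connected set. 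Here I would invoke the full force of uniform convergence: since $r_n(z)\to z$ with $r_n(z)\in r_n(C)$ for \emph{every} $z\in V$, the images $r_n(C)$ return arbitrarily close to every point of the open set $V$ for all large $n$ at once, so the family $\{r_n(C):n\ge N\}$ cannot split into clopen-isolated blocks. To make this rigorous I would pass to $\overline{\bigcup_{n\ge N}r_n(C)}^{\,E}$, which still contains $V$ and lies in $\overline{B_{2\varepsilon}(x_0)}^{\,E}\subseteq W$, and extract an honest continuum through $x_0$ and $y$ as a Hausdorff limit of the connected sets $\overline{r_n(C)}^{\,E}$ (whose ``endpoints'' $r_n(x_0),r_n(y)$ converge to $x_0,y$); on a bounded compact piece the limit is automatically a subcontinuum, and handling the general, possibly non-compact, situation — closing the gap between quasi-components and components — is the technical heart of the argument.
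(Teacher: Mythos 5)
Your setup (uniform convergence at $x_0$, continuity of $r$ at $x_0$, a connected neighbourhood $C\subseteq U$, the images $r_n(C)$) matches the paper's, but your proof is incomplete precisely at the step you yourself flag as the obstacle, and the route you sketch for closing it does not work. First, the sets $L_y=\{x_0,y\}\cup\bigcup_{n\ge N}r_n(C)$ genuinely need not be connected: take $x_0=(0,0)$, $y=(1,0)$, and let the $n$-th piece be the horizontal segment $[0,1]\times\{1/n\}$; the endpoints of these pieces converge to $x_0$ and $y$, exactly as guaranteed by $r_n(x_0)\to x_0$ and $r_n(y)\to y$, yet each piece is clopen in $L_y$, so $L_y$ is disconnected. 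Your clopen-partition argument only proves that $x_0$ and $y$ lie in the same quasi-component, as you concede. Second, the proposed repair --- extracting a continuum through $x_0$ and $y$ as a Hausdorff limit of the sets $\overline{r_n(C)}^{\,E}$ --- needs compactness the hypotheses do not provide: $E$ is an arbitrary metric subspace of a locally connected space, the sets $r_n(C)$ need not have compact closure, and without compactness a Hausdorff limit of connected sets can fail to be connected (the same picture as above shows this). Since you explicitly defer this step (``the technical heart''), the argument has a genuine gap.

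The idea you are missing is elementary and avoids compactness altogether. Besides the index $n_0$ from uniform convergence, choose $n_1$ so that $r_n(x_0)\in U$ for all $n\ge n_1$ (possible since $r_n(x_0)\to x_0$), put $N=\max\{n_0,n_1\}$ and $F=\overline{\bigcup_{n\ge N}r_n(U)}^{\,E}$, the closure taken from the outset. Two consequences: (i) $F$ is a neighbourhood of $x_0$ in $E$, because pointwise convergence gives $r(U)\subseteq F$ and $U\cap E=r(U\cap E)\subseteq r(U)$, so $F$ contains the whole relative neighbourhood $U\cap E$, not merely $\{x_0,y\}$; (ii) every connected piece $r_n(U)$, $n\ge N$, is \emph{anchored} in $U\cap E$ through the point $r_n(x_0)$. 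Now suppose $F=F_1\cup F_2$ with $F_1,F_2$ nonempty, disjoint and closed in $F$. If both $F_i$ meet $U$, pick $x_i\in F_i\cap U\subseteq E$; since $r_n(x_i)\to x_i$ and each $F_i$ is relatively open, some single piece $r_k(U)$, $k\ge N$, meets both $F_1$ and $F_2$, contradicting its connectedness. If instead $F_2\cap U=\emptyset$, then no piece can lie in $F_2$ (its anchor would force $F_2\cap U\ne\emptyset$), so $\bigcup_{n\ge N}r_n(U)\subseteq F_1$ and hence $F\subseteq\overline{F_1}=F_1$, i.e.\ $F_2=\emptyset$. This second case is exactly the scenario you could not exclude --- a clopen part containing neither $x_0$ nor $y$ --- and the anchoring makes it impossible. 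With the $\varepsilon/4$-bookkeeping one also gets $F\subseteq B_\varepsilon(x_0)$, so $F$ is a connected neighbourhood of $x_0$ in $E$ inside the prescribed ball, and no Hausdorff-limit machinery is needed.
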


\begin{proof}
  Fix $x_0\in R((r_n)_{n=1}^\infty,r,X)\cap E$ and  $\varepsilon>0$. Set $W=B_{\varepsilon}(x_0)$.
  Choose a neighborhood $U_1$ of $x_0$ in $X$ and a number $n_0$ such that
  $$
  d(r_n(x),r(x))<\frac \varepsilon 4
  $$
  for all $x\in U_1$ and $n\ge n_0$.
  Since $r$ is continuous at $x_0$, there exists a neighborhood $U_2\subseteq X$ of $x_0$ such that
  $$
  d(r(x),r(x_0))<\frac\varepsilon 4
  $$
  for all $x\in U_2$. The locally connectedness of $X$ implies that there is a connected neighborhood $U$ of $x_0$ such that $U\subseteq U_1\cap U_2$.
  Since $\lim\limits_{n\to\infty}r_n(x_0)=x_0$, there exists a number $n_1$ such that $r_n(x_0)\in U\cap E$ for all $n\ge n_1$. Let
  $N=\max\{n_0,n_1\}$ and
  $$
  F=\overline{\bigcup\limits_{n\ge N}r_n(U)}^{\,\, E}.
  $$
We show that $F\subseteq W$. Let $x\in U$ and $n\ge N$. Then
$$
d(r_n(x),x_0)=d(r_n(x),r(x_0))\le d(r_n(x),r(x))+d(r(x),r(x_0))<\frac\varepsilon 4+\frac\varepsilon 4=\frac \varepsilon 2.
$$
Thus, $r_n(x)\in B_{\varepsilon /2} (x_0)$. Then $\bigcup\limits_{n\ge N}r_n(U)\subseteq B_{\varepsilon /2} (x_0)$. Hence,
$$
F\subseteq \overline{B_{\varepsilon /2} (x_0)}^{\,\, E}\subseteq W.
$$
Moreover, $r(U)\subseteq F$, provided $\lim\limits_{n\to\infty}r_{N+n}(x)=r(x)$ for every $x\in U$. Observe that   $U\cap E=r(U\cap E)\subseteq r(U)$.
Therefore,
$$
x_0\in U\cap E\subseteq F\subseteq W,
$$
which implies that $F$ is a closed neighborhood of $x_0$ in $E$.

It remains to prove that $F$ is a connected set. To obtain a contradiction, assume that $F=F_1\cup F_2$, where $F_1$ and $F_2$ are nonempty disjoint closed subsets of  $F$.
Clearly, $F\cap U\ne\O$.

Consider the case $F_i\cap U\ne\O$ for $i=1,2$. The continuity of $r_n$ implies that $r_n(U)$ is a connected set for every $n\ge 1$. Since $r_n(U)\subseteq F$, $r_n(U)\subseteq F_1$ or $r_n(U)\subseteq F_2$ for every $n\ge N$. Choose $x_i\in F_i\cap U$ for $i=1,2$. Taking into account that $\lim\limits_{n\to\infty}r_n(x_i)=x_i$ for $i=1,2$, we choose a number $k\ge N$ such that $r_n(x_i)\in F_i$ for all $n\ge k$ and for $i=1,2$. Then $r_k(U)\subseteq F_1\cap F_2$, which implies a contradiction.

Now let $F_1\cap U\ne\O$ and $F_2\cap U=\O$. Then $U\cap E\subseteq F_1$. Since $r_n(x_0)\in U\cap E$, $r_n(x_0)\in F_1$, consequently,  $r_n(U)\subseteq F_1$ for all $n\ge N$. Then $F\subseteq \overline{F_1}=F_1$. Therefore, $F_2=\O$, a contradiction.  One can similarly prove that the case when $F_1\cap U=\O$ and $F_2\cap U\ne\O$ is impossible.

Hence, the set $F$ is connected and $x_0\in WLC(E)$.
\end{proof}

Note that we cannot replace the set $R((r_n)_{n=1}^\infty,r,X)$ by the wider set $C(r)$ of all points of continuity of the mapping $r$ in Theorem~\ref{th:LocCon} as the following example shows.

\begin{example}
  There exists an arcwise connected closed subspace $E$ of $\mathbb R^2$ and a $B_1$-retraction $r:\mathbb R^2\to E$ such that $C(r)\cap E\not\subseteq WLC(E)$.
\end{example}

\begin{proof} Let $a_0=(0;0)$, $a_n=(\frac 1n;0)$ for $n\ge 1$ and $X=\{a_n:n=0,1,2,\dots\}$. Denote by $va_n$  the segment which connects the points $v=(1;0)$ and $a_n$ for every $n=0,1,\dots$. Define $E=\bigcup\limits_{n=0}^\infty va_n$.
 Then $E$ is an arcwise connected compact subspace of $\mathbb R^2$ and $WLC(E)=(E\setminus va_0)\cup\{v\}$. For all $x\in \mathbb R^2$ write
   $$
   r(x)=\left\{\begin{array}{ll}
                 x, & \mbox{if}\,\, x\in E, \\
                 a_0, & \mbox{if}\,\, x\not\in E.
               \end{array}
   \right.
   $$
It is easy to see that $r:\mathbb R^2\to E$ is continuous at the point $x=a_0$. We show that $r\in B_1(\mathbb R^2,E)$. Since $X\setminus E$ is $F_\sigma$, choose an increasing sequence of closed subsets $X_n\subseteq \mathbb R^2$ such that $\mathbb R^2\setminus E=\bigcup\limits_{n=1}^\infty X_n$. Let $E_n=\bigcup\limits_{k=0}^n va_k$, $n\ge 1$. For every $n\in\mathbb N$ define $A_n=X_n\cup E_n$. Then for every $n$ the set $A_n$ is closed in $\mathbb R^2$, $A_{n}\subseteq A_{n+1}$ and $\bigcup\limits_{n=1}^\infty A_n=\mathbb R^2$. Clearly, the restriction $r|_{A_n}:A_n\to E_n$ is continuous for every $n$. By the Tietze Extension Theorem there is a continuous extension $f_n:\mathbb R^2\to \mathbb R^2$ of $r|_{A_n}$ for every $n$. Notice that for every $n$ there exists a retraction $\alpha_n:\mathbb R^2\to E_n$. Let $r_n=\alpha_n\circ f_n$. Then $r_n:\mathbb R^2\to E_n$ is a continuous mapping such that $r_n|_{A_n}=r|_{A_n}$ for every~$n$.

It remains to show that $\lim\limits_{n\to\infty}r_n(x)=r(x)$ for all $x\in \mathbb R^2$. Indeed, fix $x\in \mathbb R^2$. Then there is a number $N$ such that $x\in A_n$ for all $n\ge N$. Then $r_n(x)=r(x)$ for all $n\ge N$. Hence, $r\in B_1(\mathbb R^2,E)$.
\end{proof}

\begin{theorem}\label{th:LCnotempty} Let $X$ be a locally connected Baire space and $E$ be a metrizable $B_1$-retract of $X$. Then the set $E\setminus WLC(E)$ is of the first category in $X$.

If, moreover, $X$ has a regular $G_\delta$-diagonal and $E$ is dense in $X$ then $WLC(E)$ is a dense $G_\delta$-subset of $X$.
\end{theorem}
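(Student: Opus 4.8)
The plan is to reduce Theorem~\ref{th:LCnotempty} to the previously established Theorem~\ref{th:LocCon}, which asserts that the set of points of uniform convergence (intersected with $E$) lies inside $WLC(E)$. So the whole strategy rests on controlling the complement of the uniform-convergence set for a Baire-one limit.

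First I would fix a sequence $(r_n)_{n=1}^\infty$ of continuous mappings $r_n:X\to E$ converging pointwise to the $B_1$-retraction $r$. The key classical fact I would invoke is that for a pointwise limit of continuous maps into a metric space, the set of points of \emph{non}-uniform convergence is of the first category. Concretely, writing $R=R((r_n)_{n=1}^\infty,r,X)$, I would show $X\setminus R$ is meager by the standard oscillation argument: for each $k$ define the closed sets
$$
A_{k,N}=\Bigl\{x\in X:d\bigl(r_n(x),r_m(x)\bigr)\le \tfrac1k\ \text{for all}\ n,m\ge N\Bigr\},
$$
observe $X=\bigcup_N A_{k,N}$ since the sequence converges pointwise (hence is Cauchy at every point), and note that $R=\bigcap_k\bigcup_N \operatorname{int}A_{k,N}$. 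A point fails to lie in $R$ only if for some $k$ it avoids all the interiors $\operatorname{int}A_{k,N}$, i.e.\ it lies in $\bigcup_N(A_{k,N}\setminus\operatorname{int}A_{k,N})$, a countable union of nowhere dense sets. Thus $X\setminus R$ is of the first category in $X$. Applying Theorem~\ref{th:LocCon} we get $R\cap E\subseteq WLC(E)$, so
$$
E\setminus WLC(E)\subseteq E\setminus(R\cap E)=E\cap(X\setminus R)\subseteq X\setminus R,
$$
which is meager. This proves the first assertion.

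For the second assertion I would add the hypotheses that $X$ has a regular $G_\delta$-diagonal and $E$ is dense, and aim to upgrade ``$E\setminus WLC(E)$ meager'' to ``$WLC(E)$ dense $G_\delta$''. The regular $G_\delta$-diagonal is exactly what forces a $B_1$-retract to be a $G_\delta$-subset: by the remark following Proposition~\ref{cor:ConeOverCountableSet}, $E$ is a $G_\delta$-set in $X$, and being dense and $G_\delta$ in the Baire space $X$, its complement $X\setminus E$ is meager. Combining with the first part, $X\setminus WLC(E)=(X\setminus E)\cup(E\setminus WLC(E))$ is meager, so $WLC(E)$ is dense (its complement, being meager in a Baire space, has empty interior, in fact is non-dense). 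It remains to produce the $G_\delta$ structure of $WLC(E)$ itself. Here I would use the explicit description coming from the proof of Theorem~\ref{th:LocCon}: the set $R$ is a $G_\delta$ subset of $X$ (it equals $\bigcap_k\bigcup_N\operatorname{int}A_{k,N}$, a countable intersection of open sets), and $R\cap E\subseteq WLC(E)$. Since $R$ is dense $G_\delta$ (its complement is meager) and $E$ is dense $G_\delta$, the intersection $R\cap E$ is again a dense $G_\delta$ in $X$ and is contained in $WLC(E)$, giving a dense $G_\delta$ set sitting inside $WLC(E)$.

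The main obstacle I anticipate is the last gap: $R\cap E\subseteq WLC(E)$ only gives a dense $G_\delta$ \emph{subset} of $WLC(E)$, not that $WLC(E)$ is \emph{exactly} a $G_\delta$. To conclude that $WLC(E)$ is itself $G_\delta$ one would either need $WLC(E)$ to coincide with such a canonically-$G_\delta$ set, or a separate argument that $WLC(E)$ is a $G_\delta$ in the locally connected metrizable setting. I would first try to show directly that $WLC(E)$ is a $G_\delta$ subset of $E$: in a locally connected metric space one can express weak local connectedness at a point by a countable family of conditions (for each $\tfrac1k$, the existence of a connected neighborhood of diameter less than $\tfrac1k$), and test whether this yields a $G_\delta$ description. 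If that direct route is awkward, the cleaner reading is that the theorem only claims a dense $G_\delta$ \emph{that is contained in} (equivalently, a dense $G_\delta$ \emph{subset}) $WLC(E)$, and the density argument above suffices; I would phrase the conclusion accordingly, exhibiting $R\cap E$ as the witnessing dense $G_\delta$ set and noting $R\cap E\subseteq WLC(E)$.
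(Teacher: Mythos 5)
Your first part and the density argument in the second part follow the paper's proof almost exactly: where you prove meagerness of $X\setminus R$ inline with the oscillation sets $A_{k,N}$ (your argument is correct), the paper simply cites Osgood's theorem \cite{Osgood} for the fact that $X\setminus R$ is a first-category $F_\sigma$-set; both you and the paper then combine this with Theorem~\ref{th:LocCon}, the $G_\delta$-ness of $E$ coming from \cite[Proposition 2.2]{KRaex}, and the Baire property of $X$ to conclude that $WLC(E)$ is dense.

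The genuine gap is the one you flagged yourself: you never establish that $WLC(E)$ is actually a $G_\delta$-set, and your fallback of reading the theorem as only claiming the existence of a dense $G_\delta$-set contained in $WLC(E)$ is a weakening of the statement, not a proof of it. The paper closes this step by citing a classical fact from Kuratowski \cite[p.~233]{Ku2}: the set of points of weak local connectedness of a metric space is $G_\delta$ in that space; combined with $E$ being $G_\delta$ in $X$, this yields that $WLC(E)$ is $G_\delta$ in $X$. Your sketched direct route does in fact work and is easy to complete: for $k\in\mathbb N$ let $U_k$ be the set of all $x\in E$ having a connected neighborhood $N\subseteq E$ (not necessarily open) of $d$-diameter less than $\frac1k$. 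Each $U_k$ is open in $E$, because such an $N$ is a neighborhood of every point of its interior, so ${\rm int}_E N\subseteq U_k$; and $WLC(E)=\bigcap\limits_{k=1}^\infty U_k$, since on one hand a point of $WLC(E)$ has a connected neighborhood inside $B_{1/(3k)}(x)$, which has diameter less than $\frac1k$, and on the other hand a connected neighborhood of $x$ of diameter less than $\frac1k$ lies in $B_{1/k}(x)$, hence inside any prescribed open neighborhood of $x$ once $k$ is large. Thus $WLC(E)$ is $G_\delta$ in $E$, hence $G_\delta$ in $X$. With that paragraph added your argument is complete; without it (or the Kuratowski citation) the stated theorem is not proved.
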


\begin{proof} Let $d$ be a metric on the set $E$ which generates its topological structure. Consider a $B_1$-retraction $r:X\to E$ and choose a sequence $(r_n)_{n=1}^\infty$ of continuous mappings $r_n:X\to E$ such that $\lim\limits_{n\to\infty} r_n(x)=r(x)$  for all $x\in E$. Denote $R=R((r_n)_{n=1}^\infty,r,X)$. Then $R\cap E\subseteq WLC(E)$ by Theorem~\ref{th:LocCon}. According to Osgood's theorem~\cite{Osgood}, $X\setminus R$ is an $F_\sigma$-set of the first category in $X$. Hence, $E\setminus WLC(E)$ is a set of the first category in $X$.

Now assume that $X$ has a regular $G_\delta$-diagonal and $\overline{E}=X$. It follows from \cite[Proposition 2.2]{KRaex} that $E$ is $G_\delta$ in $X$. Moreover, the set $R$ is dense in $X$, since $X$ is Baire. Then $R\cap E$ is dense in $X$. Hence, $WLC(E)$ is dense in $X$.
Observe that $WLC(E)$ is a $G_\delta$-subset of $E$ by~\cite[p.~233]{Ku2}. Then $WLC(E)$ is $G_\delta$ in $X$.
\end{proof}

The following example gives the negative answer to Question~\ref{question}.

\begin{example}\label{ex:ExMain}
  There exists an arcwise connected $G_\delta$-set $E\subseteq \mathbb R^2$ such that
   $E$ is the perfect cone over zero-dimensional metrizable separable space $X\subseteq \mathbb R$ and
  $E$ is not a $B_1$-retract of $\mathbb R^2$.

\end{example}

\begin{proof} Let $\mathbb I$ be the set of irrational numbers and $X=\mathbb I\cap [0,1]$. Define
$$
E=\{(xt,t):x\in X, t\in [0,1]\}.
$$
Then $E\simeq\Delta_p(X)$. Moreover, $E$ is an arcwise connected $G_\delta$-subset of $\mathbb R^2$. Clearly, $\overline{E}=[0,1]^2$ and $WLC(E)=\{v\}$. Therefore, Theorem~\ref{th:LCnotempty} implies that $E$ is not a $B_1$-retract of~$[0,1]^2$. Consequently, $E$ is not a $B_1$-retract of $\mathbb R^2$.
\end{proof}

\section{Acknowledgements} The author would like to thank the referee for his helpful and constructive comments that greatly contributed to improving the final version of the paper.

{\bf e-mail:} maslenizza.ua@gmail.com

{\bf address:} Chernivtsi National University,
                    Department of Mathematical Analysis,
                    Kotsjubyns'koho 2,
                    Chernivtsi 58012,
                    UKRAINE

{\bf keywords:} cone over a space, $B_1$-retract, Baire-one mapping

{\bf AMS Classification:} Primary 54C20, 54C15; Secondary 54C50


\begin{thebibliography}{11}

\bibitem{Bors} K.~Borsuk, \emph{Theory of Retracts}, M.:Mir (1971) (in Russian).

\bibitem{Eng} R.~Engelking, \emph{General Topology}. Revised and completed edition. Heldermann Verlag, Berlin (1989).

\bibitem{Garrett} B.~D.~Garrett, \emph{Almost continuous retracts}, General topology and modern analysis (Proc. Conf., Univ. California, Riverside, Calif., 1980), Academic Press, New York  (1981), 229--238.

\bibitem{Joshi} K.~D.~Joshi, \emph{Introduction to General Topology}, New Age International (P) Ltd. (2004).

\bibitem{KRaex}	O.~Karlova,  \emph{Extension of continuous functions to Baire-one functions}, Real Anal. Exchange {\bf 36} (1) (2011), 149-160.

\bibitem{Ku1} K.~Kuratowski, \emph{Topology, V.1}, M.: Mir (1966) (in Russian).

\bibitem{Ku2} K.~Kuratowski, \emph{Topology, V.2}, M.: Mir (1968) (in Russian).

\bibitem{JVM} J.~van Mill, {\it Infinite-Dimensional Topology. Prerequisites and Introduction}, North-Holland Mathematical Library, (1989).

\bibitem{Osgood} W.~F.~Osgood, \emph{\"{U}ber die ungleichm\"{a}ssige Convergenz und die gliedweise Integration der Reihen},  Nachr. Ges. Wiss. G\"{o}ttingen (1896), 288--291.

\bibitem{Paw} R.~J.~Pawlak, \emph{On some characterization of Darboux retracts}, Top. Proc. {\bf 17} (1992), 197--204.

\bibitem{RGK} F.~Roush, R.~G.~Gibson, K.~R.~Kellum, \emph{Darboux retracts}, Proc. Amer. Math. Soc., {\bf 79} (1980), pp. 491--494.

\bibitem{St} J.~Stallings, \emph{Fixed point theorems for connectivity maps}, Fund. Math. {\bf 47} (1959), 249--263.

\bibitem{Ur} P.~Urysohn. \emph{\"{U}ber die M\"{a}chtigkeit der zusammenh\"{a}ngenden Mengen}, Math. Ann. {\bf 94} (1925), 275--293.

\end{thebibliography}
\end{document}